\newtheorem{theorem}{Theorem}
\newtheorem{proposition}{Proposition}
\newtheorem{lemma}{Lemma}
\theoremstyle{definition}
\newtheorem{definition}{Definition}
\newtheorem{remark}{Remark}
\DeclareMathOperator{\sech}{sech}
\DeclareMathOperator{\cn}{cn}
\DeclareMathOperator{\dn}{dn}
\DeclareMathOperator{\sn}{sn}
\DeclareMathOperator{\am}{am}
\DeclareMathOperator{\supp}{supp}
\begin{document}

\title[]{Obstacle problems for the elastic flow and related topics}

\author{Kensuke Yoshizawa}
\address[K.~Yoshizawa]{Faculty of Education, Nagasaki University,
1-14 Bunkyo-machi, Nagasaki, 852-8521, Japan
}
\email{k-yoshizaw@nagasaki-u.ac.jp}

\keywords{}
\subjclass[2020]{}

\date{\today}

\begin{abstract}
  In this note, we study an obstacle problem for the elastic flow. We prove the local-in-time existence of weak solutions and discuss their relation to classical solutions when additional regularity is obtained. 
  Related results concerning obstacle problems for the bending energy are also collected.
\end{abstract}

\maketitle


\section{Introduction}\label{sect:intro}
Elastic flow describes the evolution of planar curves driven by the gradient descent of bending energy penalized by length.
For an immersed curve $\gamma$ in $\mathbf{R}^2$, the \emph{bending energy} is defined by
\[
\mathcal{B}[\gamma]:=\int_\gamma |\kappa|^2\;\mathrm{d}s, 
\]
where $s$ denotes the arclength parameter of $\gamma$, and $\kappa:=\partial_s^2\gamma$ denotes the curvature vector of $\gamma$ (with $\partial_s:=|\partial_x \gamma|^{-1}\partial_x$).
For a fixed $\lambda>0$, we define the penalized bending energy by  $\mathcal{E}_\lambda:=\mathcal{B} + \lambda \mathcal{L}$, where $\mathcal{L}[\gamma]:=\int_\gamma \mathrm{d}s$.
Let $I:=(0,1)$ and $\bar{I}:=[0,1]$. 
We call $\gamma:\bar{I}\times[0,T]\to\mathbf{R}^2$ a \emph{(length-penalized) elastic flow} if $\gamma$ satisfies
\begin{align}\label{eq:elastic_flow}
    \partial_t \gamma = -\nabla_{L^2}\mathcal{E}_\lambda[\gamma], 
\end{align} 
where $\nabla_{L^2}\mathcal{E}_\lambda[\gamma]$ denotes the $L^2(\mathrm{d}s)$-gradient of $\mathcal{E}_\lambda$. 
It is known that this gradient can be expressed explicitly as $\nabla_{L^2}\mathcal{E}_\lambda[\gamma]=\textcolor{black}{2\nabla_s^2\kappa+|\kappa|^2\kappa-\lambda \kappa}$, where $\nabla_s f := \partial_s f -\langle \partial_sf, \partial_s \gamma \rangle\partial_s \gamma$ denotes the normal derivative along $\gamma$. 
The length-penalized elastic flow has been extensively studied; for related literature, we refer to \cite{DPS16, Dia24, DKS02, KM24, MPP21, MP21, Miura25, MY_JDE, NO_2014, NO_2017, PolPhD, Spe17}.
Nowadays there are also various studies related to the length-penalized elastic flow; e.g.\ the length-preserving elastic flow, the elastic flow of networks, the $p$-elastic flow, and so on. 

In this note we consider a version of an obstacle problem for elastic flow. 
More precisely, consider a one-parameter family of immersed curves $\gamma:\bar{I}\times[0,T]\to\mathbf{R}^2$ satisfying the following conditions: (i) $\gamma$ must lie above a given function $\psi$ called an obstacle, (ii) it evolves as the normal velocity coincides with the $L^2(\mathrm{d}s)$-gradient of $\mathcal{E}_\lambda$ as long as $\gamma$ does not touch an obstacle. 
We also impose the natural boundary condition, where (iii) the endpoints are fixed and (iv) the curvature at the both endpoints vanish. Then, the problem we consider is formally given by
\begin{align} \label{eq:P}\tag{P}
\begin{cases}
        \text{(i) \ \ } \gamma^2 \geq \psi\circ \gamma^1 \quad &\text{on} \ \ I\times [0,T], \\
        \text{(ii) \ } \partial_t^\perp\gamma = -\nabla_{L^2}\mathcal{E}_\lambda[\gamma] \quad &\text{on} \ \ \{(x,t) \mid x\in \mathcal{N}_t\}, \\
        \text{(iii) \,} \gamma(0,t)=(0,0), \ \ \gamma(1,t)=(1,0) \quad &\text{for all} \ \ t\in[0,T], \\
        \text{(iv) \,} \kappa(0,t)=\kappa(1,t)=0 \quad &\text{for all} \ \ t\in[0,T], 
\end{cases}
\end{align}
where $\partial_t^\perp\gamma$ denotes the normal velocity defined by $\partial_t^\perp\gamma:=\partial_t\gamma-\langle\partial_t\gamma, \partial_s \gamma\rangle\partial_s \gamma$, and $\mathcal{N}_t \subset I$ is the \emph{noncoincidence set} defined by 
\[\mathcal{N}_t:=\big\{x\in I \mid \gamma^2(x,t)>\psi(\gamma^1(x,t))\big\}\] 
for each $t\in[0,T]$.
Throughout this note we suppose that the given obstacle function \textcolor{black}{$\psi:\bar{I}\to\mathbf{R}$} satisfies
\begin{align}\label{eq:psi_condition}
\psi(0)<0, \ \psi(1)<0 \quad \text{and}\quad \psi\in C(\bar{I}).
\end{align}
An aim of this note is to show existence of solutions to \eqref{eq:P} among graphical curves $\gamma=\gamma_u(x,t)=(x,u(x,t))$ for some $u:\bar{I}\times [0,T]\to\mathbf{R}$, and hereafter we often identify $\gamma_u$ with $u$. 
The initial datum $\gamma_0(x)=(x,u_0(x))$ is chosen such that $u_0\in K:=\{ u\in H(I) \mid u \geq \psi \text{ in } I\}$, where $H(I):=H^2(I)\cap H^1_0(I)$ (see also Section~\ref{sect:preliminary}). 

Without the obstacle, existence of a classical solution of \eqref{eq:elastic_flow} under the natural boundary condition is already known (cf.\ \cite[Theorem 2.1]{NO_2014}).
On the other hand, it is not clear whether there exists a classical solution under the obstacle constraint since the loss of regularity of solutions may occur in the obstacle problem (see also Section~\ref{subsect:regularity}). 
Thus we have to introduce some suitable notion of a weak solution. 
\begin{definition}[Weak solution]\label{def:weak_sol}
Let $T>0$. We say that $u:\bar{I}\times[0,T]\to\mathbf{R}$ is a \emph{weak solution} of \eqref{eq:P} if $u$ satisfies the following conditions:
\begin{itemize}
    \item[(i)] $u$ lies in
    \begin{align}\label{eq:K_T}
    K_T:=\{ u\in L^\infty(0,T;H(I)) \cap H^1(0,T;L^2(I)) \mid 
        u \geq \psi \ \text{ in } {I}\times(0,T) \}; 
    \end{align}
    \item[(ii)] \ for all $v\in K_T$, 
    \begin{align}\label{eq:VI}
\begin{split}
    \int_0^T\!\!\int_I \bigg[\frac{\partial_tu (v-u)}{|\gamma_u'|} + 2\frac{u'' (v-u)''}{|\gamma_u'|^5} 
    &-5\frac{|u''|^2u'  (v-u)'}{|\gamma_u'|^7}  
     -\lambda \frac{u'' (v-u)}{|\gamma_u'|^3}  \bigg]\mathrm{d}x\mathrm{d}t  \geq0; 
\end{split}
\end{align} 
    \item[(iii)] \ \  the unique $C([0,T];L^2(I))$-representative of $u$ satisfies $u(0)=u_0$.
\end{itemize}
\end{definition}
Inequality \eqref{eq:VI} is called a \emph{variational inequality}.
We will provide a local-in-time existence result for weak solutions of \eqref{eq:P} via minimizing movements. 
\begin{theorem}[Existence of local-in-time solution]
\label{thm:existence}
Let $\lambda\geq0$ and suppose that $\psi \in C(\bar{I})$ satisfies \eqref{eq:psi_condition}. 
Then, for each $u_0 \in K$ there exists $T=T(u_0)>0$ such that \eqref{eq:P} possesses a weak solution $u$, which satisfies
\begin{align}\label{eq:thm_regularity}
    u \in L^{\infty}(0,T;H(I)) \cap H^1(0,T;L^{2}(I)) \cap L^{\frac{p}{p-1}}(0,T;W^{3,p}(I))
\end{align}
for any $p\in[2,\infty)$.
\end{theorem}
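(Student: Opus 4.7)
The plan is to construct a weak solution via a minimizing movements scheme adapted to the $L^2(\mathrm{d}s)$-gradient structure of the elastic flow and to the obstacle constraint $u\geq\psi$. Fix a time step $\tau>0$, set $u^0_\tau:=u_0$, and inductively define $u^k_\tau\in K$ as a minimizer over $K$ of
\[
F^k_\tau(u) \ := \ \mathcal{E}_\lambda[\gamma_u] \ + \ \frac{1}{2\tau}\int_I \frac{|u-u^{k-1}_\tau|^2}{|\gamma_{u^{k-1}_\tau}'|}\,\mathrm{d}x.
\]
The weight $1/|\gamma_{u^{k-1}_\tau}'|$ reflects the fact that, for a graph $\gamma_u$, the normal velocity equals $\partial_t u/|\gamma_u'|$, so the natural squared $L^2(\mathrm{d}s)$-distance between two graphs is $\int |u-u^{k-1}|^2|\gamma_u'|^{-1}\,\mathrm{d}x$; this is exactly what produces the time-derivative term in \eqref{eq:VI} after passage to the limit.

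Existence of a minimizer at each step follows by the direct method: $K$ is convex and weakly closed in $H(I)$, the bending energy $\mathcal{B}[\gamma_u]=\int_I|u''|^2(1+|u'|^2)^{-5/2}\,\mathrm{d}x$ is weakly lower semicontinuous and, together with the zero Dirichlet data and the lower bound $u\geq\psi$, is coercive on $H(I)$. Testing with the admissible competitor $u^k_\tau+\epsilon(v-u^k_\tau)\in K$ for $v\in K$ and letting $\epsilon\downarrow 0$ yields a discrete variational inequality whose time-derivative part converges, as $\tau\downarrow 0$, to the first integrand of \eqref{eq:VI}. The comparison $F^k_\tau(u^k_\tau)\le F^k_\tau(u^{k-1}_\tau)$ telescopes to
\[
\sup_k \mathcal{E}_\lambda[\gamma_{u^k_\tau}] \ + \ \sum_{k}\tau\,\Big\|\tfrac{u^k_\tau-u^{k-1}_\tau}{\tau}\Big\|_{L^2(I)}^2 \ \le \ C\,\mathcal{E}_\lambda[\gamma_{u_0}],
\]
which, passing to the piecewise-constant and piecewise-affine interpolants $\bar u_\tau$ and $\hat u_\tau$, gives uniform-in-$\tau$ bounds in $L^\infty(0,T;H(I))$ and for $\partial_t \hat u_\tau$ in $L^2(0,T;L^2(I))$.

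Aubin--Lions then extracts a subsequence converging to some $u\in K_T$ strongly in $C([0,T];H^s(I))$ for every $s<2$, weakly-$\ast$ in $L^\infty(0,T;H(I))$, and weakly in $H^1(0,T;L^2(I))$; since $H(I)\hookrightarrow C^1(\bar I)$, both $u_\tau'$ and $|\gamma_{u_\tau}'|^{-m}$ converge uniformly, which handles most of the nonlinear factors in \eqref{eq:VI}. The initial condition (iii) of Definition~\ref{def:weak_sol} follows from $\hat u_\tau(0)=u_0$ together with the strong convergence. The \emph{main obstacle} is the cubic term $|u_\tau''|^2 u_\tau'(v-u_\tau)'/|\gamma_{u_\tau}'|^7$: since $u_\tau''$ converges only weakly in $L^2$, the quadratic factor $|u_\tau''|^2$ is not automatically weakly continuous. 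My intended remedy is to exploit the sharpness of the discrete dissipation estimate: the energy inequality for $\hat u_\tau$, combined with weak lower semicontinuity of $\mathcal{E}_\lambda$, should force $\mathcal{E}_\lambda[\gamma_{\hat u_\tau(t)}]\to \mathcal{E}_\lambda[\gamma_{u(t)}]$ for a.e.\ $t$, which upgrades $u_\tau''(\cdot,t)\wto u''(\cdot,t)$ to strong $L^2(I)$-convergence pointwise a.e.\ in $t$; a dominated-convergence argument then closes the passage to the limit in \eqref{eq:VI}.

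Finally, the regularity $u\in L^{p/(p-1)}(0,T;W^{3,p}(I))$ is derived by reading \eqref{eq:VI} for a.e.\ fixed $t$ as a one-dimensional quasilinear elliptic obstacle problem with right-hand side $\partial_t u(\cdot,t)\in L^2(I)$. The elliptic obstacle-problem regularity for the bending energy collected in Section~\ref{subsect:regularity} yields $u(\cdot,t)\in W^{3,p}(I)$ with a bound of the form $\|u(\cdot,t)\|_{W^{3,p}}\le C\bigl(1+\|\partial_t u(\cdot,t)\|_{L^2}^{\alpha(p)}\bigr)$; interpolating between the uniform $L^\infty(0,T;H^2)$-bound and the $L^2(0,T;L^2)$-bound on $\partial_t u$ produces the time-integrability exponent $p/(p-1)$. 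The time $T=T(u_0)$ will be determined quantitatively by the initial energy $\mathcal{E}_\lambda[\gamma_{u_0}]$ through the constants appearing in the coercivity of $\mathcal{E}_\lambda$ on graphs and in these regularity estimates.
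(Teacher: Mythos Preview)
Your overall scheme matches the paper's, but there are two genuine gaps.

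First, your appeal to the direct method over $K$ is not justified: the bending energy $\int_I |u''|^2(1+|u'|^2)^{-5/2}\,\mathrm{d}x$ is \emph{not} coercive on $H(I)$ --- the denominator kills any $H^2$ control once $|u'|$ is unbounded --- and neither the obstacle constraint $u\geq\psi$ nor the $L^2$ penalty supplies the missing slope bound (indeed, for $\lambda=0$ minimizers of $\mathcal{B}$ over $K$ may fail to exist, as recalled in Section~\ref{sect:survey}). The paper handles this by minimizing instead over the artificially constrained set $\tilde K=\{v\in K:\|v'\|_{L^\infty}\leq 2M_0\}$ with $M_0:=\|u_0'\|_{L^\infty}$, and then chooses $T=T(u_0)$ small enough (via the interpolation inequality of Lemma~\ref{Gtheorem:2.1}, see Lemma~\ref{Gtheorem:4.5}) so that every discrete iterate in fact satisfies $\|u_{i,n}'\|_{L^\infty}\leq\tfrac32 M_0$. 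The extra constraint is therefore inactive at minimizers, so the Euler--Lagrange inequality is unaffected, while one simultaneously obtains uniform $H^2$ bounds and uniform two-sided bounds on the nonlinear weights $|\gamma_u'|^{-m}$. This is exactly where the local-in-time restriction enters; your description of how $T$ is determined is too vague to capture it.

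Second, your proposed remedy for the quadratic term $|u_\tau''|^2$ --- forcing energy convergence from the discrete dissipation inequality plus weak lower semicontinuity --- does not work: those two ingredients give only $\mathcal{E}_\lambda[u(t)]\leq\liminf\mathcal{E}_\lambda[\bar u_\tau(t)]$, not the reverse inequality, and without convexity there is no mechanism (chain rule, EDI-to-EDE upgrade) to close the gap. The paper instead obtains \emph{strong} convergence $\bar u_n\to u$ in $L^2(0,T;W^{2,\infty}(I))$ by establishing the $L^{p/(p-1)}(0,T;W^{3,p})$ estimate already at the \emph{discrete} level (Lemma~\ref{Gtheorem:4.10}): each Euler--Lagrange inequality defines a nonnegative Radon measure $\mu_{i,n}$ supported on the coincidence set, one bounds $\tau_n\sum_i\mu_{i,n}(I)^2$ uniformly (Lemma~\ref{Gtheorem:4.8}), and a test-function bootstrap then yields the $W^{3,p}$ bound on each $u_{i,n}$. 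Interpolating this against the $L^2$ closeness of $\bar u_n$ and $u_n$ gives strong convergence of $\bar u_n''$, after which every term in the discrete variational inequality passes to the limit. Thus the third-order regularity is not an a posteriori consequence of \eqref{eq:VI} as you suggest, but an essential ingredient of the existence proof itself.
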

 
\vspace{-1.5\baselineskip}

\textcolor{black}{
\begin{remark}
Recently, Okabe and the author \cite{OY_2019} considered a very similar problem. 
Their variational inequality \cite[Equation P]{OY_2019} corresponds to the weak form of 
\[
\frac{d}{dt}\mathcal{E}_\lambda[\gamma_u(t)] = -\int_I |\partial_t u|^2\;\mathrm{d}x
\]
(see \cite[Equation 1.7]{OY_2019}). 
On the other hand, our inequality \eqref{eq:VI} comes from the weak form of
\begin{align}\label{eq:energy_structure}
\frac{d}{dt}\mathcal{E}_\lambda[\gamma_u(t)] = -\int_I \big|\partial_t^\perp\gamma_u \big|^2\;\mathrm{d}s
\end{align}
(see Section~\ref{subsect:bridge}).
Thus, the metric spaces considered in \cite{OY_2019} and in the present work are different. 
This difference yields a new nonlinearity that does not appear in \cite{OY_2019}.
Indeed, the variational inequality considered in \cite[Equation P]{OY_2019} corresponds to that obtained by replacing $\partial_t u/|\gamma_u'|$ with $\partial_t u$ in \eqref{eq:VI}. 
In this note, we rigorously verify the construction of solutions via a similar argument, where subtle differences arise; for instance, the argument requires the use of additional interpolation techniques.
\end{remark}
}

\textcolor{black}{
Another aim of this note is to confirm that if a weak solution $u$ of \eqref{eq:P} is smooth, then $\gamma_u$ can retrieve all the properties in \eqref{eq:P} and the energy identity \eqref{eq:energy_structure}. 
Equation \eqref{eq:energy_structure} is a well-known property of the classical solutions of \eqref{eq:elastic_flow} (see e.g.\ \cite[Lemma A.2]{DP14}).
}
On the other hand, it is not straightforward to deduce \eqref{eq:energy_structure} for weak solutions constructed via minimizing movements. 
To ensure that \eqref{eq:energy_structure} is well defined, we require $C^1(0,T;X)$, where $X$ denotes the natural energy space.
However, this regularity cannot be derived from a standard argument using minimizing movements.
Thus, we need a weaken version of \eqref{eq:energy_structure}. Indeed, in other geometric flows, some estimates related to \eqref{eq:energy_structure} have been considered: 
Laux--Otto \cite[Lemma 1]{LO20} deduced the estimate including the metric slope for the approximate solution of mean curvature flow; Okabe--Wheeler \cite[Equation 4]{OW23} obtained an integral form of \eqref{eq:energy_structure} (where an extra factor term $\tfrac{1}{2}$ appears on the right-hand side and the equality is replaced with the inequality) in the so-called $p$-elastic flow.

\textcolor{black}{ 
The other aim of this note is to provide an overview of existing results concerning obstacle problems for the bending energy. 
To this end, in Section~\ref{sect:survey} we briefly refer to several relevant works and summarize their main contributions.
In Subsection~\ref{subsect:regularity} we also refer to the existing results concerning the loss of regularity.
}

This paper is organized as follows:
\textcolor{black}{In Section~\ref{sect:survey} we collect and present several results in recent years about obstacle problems for the bending energy. 
}
In Section~\ref{sect:preliminary} we check that weak solutions $u$ of \eqref{eq:P} satisfy both all the conditions in \eqref{eq:P} and \eqref{eq:energy_structure}, under smoothness assumption.
We also observe from the existing literature that the loss of regularity of a weak solution of \eqref{eq:P} may occur. 
In Section~\ref{sect:existence}, we show a local-in-time weak solution via the minimizing movements. 
In Section~\ref{sect:appendix_Jacobi}, we collect the definitions of the elliptic integrals and the Jacobian elliptic integrals.

\section{Overview of related work}\label{sect:survey}
To the author's knowledge, the obstacle problem for the bending energy $\mathcal{B}$ was first studied by Dall'Acqua--Deckelnick \cite{DD18}, among graphical curves. 
Here, under the identification of $\gamma_u$ with $u$, the functional $\mathcal{B}(u)$ for $u\in H^2(I)$ is defined by 
\begin{align}\label{eq:B-graph}
\mathcal{B}(u)=\mathcal{B}[\gamma_u]=\int_I\Big(\frac{u''}{(1+(u')^2)^{\frac{3}{2}}} \Big)^2\sqrt{1+(u')^2}\;\mathrm{d}x.
\end{align}
Due to the lack of convexity of $\mathcal{B}$, the existence and uniqueness of minimizers become more delicate compared to the classical convex setting, such as that treated in \cite{KSbook}. 

Dall'Acqua--Deckelnick considered the minimization problem of $\mathcal{B}$ among $K$. 
The condition $u\in K$ also means that the endpoints of graphical curves are fixed, which is called the pinned boundary condition. 
They gave some conditions on the obstacle that assure existence of minimizers, and also discussed regularity (the regularity will be revisited in Section~\ref{subsect:regularity}). 
M\"uller \cite{Mueller19} considered the minimization of $\mathcal{B}$ among the set of the so-called pseudographs with endpoints fixed. 
He also showed that if $\psi$ is a symmetric cone obstacle and $\psi(\frac{1}{2})>h_*$, defined by  
\begin{align}\label{eq:h_*}
    h_*:= \frac{2}{c_0} \simeq 0.83462, 
    \quad \text{where} \quad c_0:=\frac{\sqrt{\pi}}{2}\frac{\Gamma(\frac{3}{4})}{\Gamma(\frac{5}{4})}, 
\end{align}
then there is no minimizer of $\mathcal{B}$ in $K$. 
On the other hand, Miura \cite{Miura21} and the author \cite{Ysima} independently showed that the minimizer of $\mathcal{B}$ in $K_{\rm sym}:=\{u\in K \mid u=u(1-\cdot)\}$ is unique if a symmetric cone $\psi$ satisfies $\psi(\frac{1}{2})< h_*$, and there exists no minimizer in $K_{\rm sym}$ if $\psi(\frac{1}{2})\geq h_*$.
Thus, $h_*$ represents the threshold height for existence on symmetric minimizers under the pinned boundary condition. 
While in \cite{Miura21} and \cite{Ysima} symmetry is imposed to the admissible set, M\"{u}ller demonstrated in \cite[Section 6]{Mueller21} that minimizers of $\mathcal{B}$ in $K$ must be symmetric if $\psi$ is symmetry and $\psi|_{[0,\frac{1}{2}]}$ is increasing, and the size of $\psi$ is sufficiently small. 

These studies have been extended in various directions. 
One is related to a generalization of the functional: Dall'Acqua et.\ al.\ \cite{DMOY24} dealt with the minimization problem of  $\mathcal{B}_p[\gamma]:=\int_\gamma |\kappa|^p\;\mathrm{d}s$ ($p\in (1,\infty)$) in \textcolor{black}{$\{ u\in W^{2,p}(I)\cap W^{1,p}_0(I) \mid u \geq \psi \text{ in } I\}$}, and they obtained the counterpart of the optimal height $h_*$ to ensure the existence of (symmetric) minimizers.
Another is related to a generalization of the admissible set: M\"{u}ller and the author \cite{MYarXiv2504} considered the minimization problem for $\mathcal{E}_\lambda$ with $\lambda\geq0$ among planar (not necessarily graphical) curves pinned at the endpoints, and they investigated how $\lambda$ depends on the coincidence set of minimizers. 

Recently, the minimization problem of $\mathcal{B}$ under the clamped boundary condition is also considered. 
For instance, Grunau--Okabe \cite{GO23, GO25} considered minimizers of $\mathcal{B}$ in $K_{\rm clamp}:=\{u\in H^2_0(I) \mid u\geq \psi \text{ in } I\}$.
They found that the conditions to ensure existence of minimizers in $K_{\rm clamp}$ are rather different from the pinned case.
In the clamped case, the threshold height is given by
\[
h^*:= \frac{1}{2}\max_{z \in[0,\infty)} \frac{2+2(1+z^2)^{-\frac{1}{4}}}{c_0-G(z)} \simeq 1.1890, \quad \text{where} \quad G(z):=\int_0^z\frac{1}{(1+y^2)^{\frac{5}{4}}}\mathrm{d}y.
\]
More precisely, it is shown in \cite[Theorem 1.2]{GO23} that there exists no minimizer of $\mathcal{B}$ in $K_{\rm sym} \cap K_{\rm clamp}$ if $\psi(x)\geq h^*$ for some $x\in I$. 
On the other hand, in \cite[Theorem 1.3]{GO25} they showed this height is optimal: 
for any $\delta>0$ there exists an obstacle $\psi_\delta$ satisfying \eqref{eq:psi_condition} and $\psi_\delta(\frac{1}{2})<h^*-\delta$ 
such that a minimizer of $\mathcal{B}$ in $K_{\rm sym} \cap K_{\rm clamp}$ exists. 
The differences between the pinned case and the clamped case are also observed in the energy bound. 
In the pinned case, it follows from \cite[Lemma 2.4]{DD18} that $\inf\{\mathcal{B}(u) \mid  u\in K\} \leq c_0^2$ for any obstacle $\psi$ satisfying \eqref{eq:psi_condition}, while in the clamped case, Grunau--Okabe \cite[Theorem 3.2]{GO25} found a family $\{\psi_\varepsilon\}_{\varepsilon>0}$ such that a minimizer of $u_\varepsilon$ of $\mathcal{B}$ in $K_{\rm sym} \cap K_{\rm clamp}$ satisfies $\mathcal{B}(u_\varepsilon) \to \infty$ as $\varepsilon\to0$.
In \cite{GO23}, they also applied their results to the studies on surfaces of revolution. 

Other obstacle problems involving the bending energy have been studied. 
Miura \cite{Miura16} dealt with the minimization problem for $\varepsilon^2\mathcal{B}+\mathcal{L}$ with the adhesion term under an obstacle constraint, and considered the limit $\varepsilon\to0$.
Dayrens--Masnou--Novaga \cite{DNP20} studies the problem of minimizing $\mathcal{B}$ among Jordan curves
confined in a given open set.
They showed existence of minimizers as well as some structural properties, e.g.\ a minimal curve has at least two contact points with the confinement.

We close this section by mentioning dynamical problems for $\mathcal{E}_\lambda$ with the obstacle constraint, which is particularly relevant to this manuscript. 
To the author's knowledge, a pioneering work on the parabolic obstacle problem for the bending energy is due to M\"{u}ller \cite{mueller20}. 
He studied the obstacle problem for the elastic flow in the metric space endowed with the $H(\mathrm{d}x)$-norm, not $L^2(\mathrm{d}s)$-norm (recall $H:=H^2\cap H^1_0$), so that solutions of the problem in \cite{mueller20} satisfy
\begin{align}\label{eq:energy_structure_H^2}
    \frac{d}{dt}\mathcal{E}_\lambda[\gamma_u(t)] = - \|\partial_t u(\cdot, t)\|_{H(I)} \quad \text{for a.e. } t\in(0,T),
\end{align}
instead of \eqref{eq:energy_structure} (see \cite[Proposition 2.6]{mueller20}). 
There are advantages to consider the $H(\mathrm{d}x)$-norm. 
One such advantage concerns regularity: in his setting, one may impose $H^1(0,T;H^2(I))$ to weak solutions as a natural class, so the regularity gets finer than our class $K_T$. 
This in particular allows us to obtain not only \eqref{eq:energy_structure_H^2} but also the property called ``energy dissipation'' as in \cite[Proposition 2.18]{mueller20}.
Another advantage is that the energy $\mathcal{E}_\lambda$ gets locally semiconvex. 
When the energy is (semi-)convex, the evolution admits a formulation as a metric gradient flow. 
Since the bending energy is not semiconvex with respect to the $L^2$-metric, employing the $H$-norm enables us to circumvent the difficulties caused by the lack of convexity. 

Parabolic obstacle problems for $\mathcal{E}_\lambda$ in the $L^2$-metric are also considered. 
Okabe and the author \cite{OY_2019} and M\"{u}ller \cite{Mueller21} have studied a variational inequality obtained by replacing $|\gamma_u'|^{-1} \partial_t u$ with $\partial_t u$ in inequality \eqref{eq:VI}. 
It is shown that a (unique) solution $u$ of their problem satisfies 
\[
\mathcal{E}_\lambda[\gamma_u(T)] -\mathcal{E}_\lambda[\gamma_u(0)] \leq -\frac{1}{2} \int_0^T \|\partial_t u(\cdot, t)\|_{L^2(I)}\;\mathrm{d}t
\]
(see e.g.\ \cite[Theorem 1.1]{OY_2019} and \cite[Lemma 4.17]{Mueller21}). 
Due to the lack of convexity, it is more difficult to improve the above estimate (e.g.\ to remove the factor $\frac{1}{2}$). 
This means that, compared to the $H^2$-framework, it is not easy to show that weak solutions can recover the same energy dissipation properties as those in \eqref{eq:energy_structure} or \eqref{eq:energy_structure_H^2}. 

\section{From weak to classical solutions}\label{sect:preliminary}

Hereafter, we always let $\lambda\geq0$ denote an arbitrary parameter.
Let $H(I)$ be the Hilbert space $H^2(I)\cap H^1_0(I)$ equipped with the inner product
$$
(u,v)_{H(I)} := \int_I u''v'' \;\mathrm{d}x \quad \text{for}\quad u, v \in H(I). 
$$
Throughout this paper we employ the norm $\Vert\cdot\Vert_{H(I)}$ on $H(I)$ as
$$
\Vert u\Vert_{H(I)} := (u,u)_{H(I)}^{1/2} \quad\text{for}\quad u \in H(I), 
$$
which is equivalent to $\Vert\cdot\Vert_{H^2(I)}$ (see e.g.\ \cite[Theorem 2.31]{GGS_2010}), i.e.\ there exists a positive constant $c_H$ such that 
\begin{equation}
\label{Geq:2.1} 
c_H \Vert u \Vert_{H^2(I)} \le \Vert u \Vert_{H(I)} \le \Vert u \Vert_{H^2(I)}.  
\end{equation}

\subsection{Remarks on the weak solution}\label{subsect:rem_weak_sol}

To begin with, note that the class $X:=L^\infty(0,T;H(I)) \cap H^1(0,T;L^2(I))$ in the admissible set $K_T $ in \eqref{eq:K_T} is slightly stronger than the natural class. 
Indeed, to define the weak form \eqref{eq:VI} it suffices to impose $u\in L^2(0,T;H(I)) \cap H^1(0,T;L^2(I))$. 
An advantage to introduce $X$ is to allow weak solutions to possess a continuous representation on $[0,T]$ with values in $C^1(\bar{I})$ by the embedding $X \subset C([0,T]; C^1(\bar{I}))$ (see e.g.\ \cite[Theorem II.5.16]{BF_2013}). 
Therefore, for each weak solution $u$ of \eqref{eq:P} the noncoincidence set 
\[\mathcal{N}_t:=\Set{x\in I | u(x,t)>\psi(x)}\] 
makes sense for every $t\in [0,T]$. 
In addition, we can use the admissible set $X$ without issue in our problem: in general, solutions constructed via the minimizing movements scheme often lie in $X$, and this is also the case in our problem (see Section~\ref{sect:existence}). 

In the variational inequality \eqref{eq:VI}, the test function $v\in K_T$ is taken to belong to the same convex set as the solution, in analogy with the classical framework presented in \cite{Fri_book}. 
However, as demonstrated in the following proposition, this test function framework can be refined.

\begin{proposition} \label{prop:Radon}
Let $u\in K_T$ be a weak solution of \eqref{eq:P}. Then, there exists $S\subset [0,T]$ such that $[0,T]\setminus S$ is a set of measure zero and 
\begin{align}\label{eq:Radon}
\int_I \bigg[\frac{\partial_tu (v-u)}{|\gamma_u'|}&+ 2\frac{u''(v-u)''}{|\gamma_u'|^{5}} -5\frac{|u''|^2u' (v-u)'}{|\gamma_u'|^7} -\lambda \frac{u'' (v-u)}{|\gamma_u'|^3} \bigg]\mathrm{d}x \geq 0 
\end{align}
for all $t\in S$ and $v\in K$. 
\end{proposition}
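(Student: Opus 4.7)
The plan is to reduce the integrated inequality \eqref{eq:VI} to its pointwise-in-time counterpart via two standard but non-trivial steps: first a time-localization using cut-off test functions, then Lebesgue differentiation combined with a density argument to cover all $v\in K$.

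The first step exploits the feature that the bracket in \eqref{eq:VI} is linear in $(v-u)$. I would fix $v\in K$, regarded as a constant-in-time function, fix $\tau\in(0,T)$, and choose $h>0$ small with $\tau+h<T$. For $\varepsilon\in(0,h/2)$, let $\phi_h^\varepsilon\in C^1([0,T])$ take values in $[0,1]$, vanish outside $(\tau,\tau+h)$, and equal $1$ on $[\tau+\varepsilon,\tau+h-\varepsilon]$, and set
\[
v_h^\varepsilon(x,t):=u(x,t)+\phi_h^\varepsilon(t)\bigl(v(x)-u(x,t)\bigr).
\]
Since $u,v\geq\psi$ and $\phi_h^\varepsilon\in[0,1]$, the function $v_h^\varepsilon$ satisfies $v_h^\varepsilon\geq\psi$; and since $\partial_tv_h^\varepsilon=(1-\phi_h^\varepsilon)\partial_tu+(\phi_h^\varepsilon)'(v-u)$ lies in $L^2(0,T;L^2(I))$, one has $v_h^\varepsilon\in K_T$. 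Substituting into \eqref{eq:VI} and using linearity in $(v-u)$, only $\phi_h^\varepsilon(t)F(t;v)$ remains in the integrand, where $F(t;v)$ denotes the spatial integral in \eqref{eq:Radon} at time $t$ with test function $v$. A direct estimate exploiting $u\in L^\infty(0,T;H(I))$, $\partial_tu\in L^2((0,T)\times I)$ and the embedding $H^2(I)\hookrightarrow C^1(\bar I)$ shows $F(\cdot\,;v)\in L^1(0,T)$, so passing $\varepsilon\downarrow 0$ by dominated convergence yields
\[
\int_\tau^{\tau+h}F(t;v)\,dt\geq 0.
\]

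For the second step, $K$ is a closed convex subset of the separable Hilbert space $H(I)$, hence is itself separable; I would fix a countable dense subset $\{v_n\}\subset K$. For each $n$, the set $S_n\subset(0,T)$ of Lebesgue points of $t\mapsto F(t;v_n)$ has full measure, and dividing the previous inequality by $h$ and letting $h\downarrow 0$ gives $F(\tau;v_n)\geq 0$ for every $\tau\in S_n$. Let $S$ be the intersection of all $S_n$, further restricted to the full-measure set on which $u(\cdot,\tau)\in H(I)$ and $\partial_tu(\cdot,\tau)\in L^2(I)$. For arbitrary $v\in K$ and $\tau\in S$, the four summands constituting $w\mapsto F(\tau;w)$ are continuous linear functionals on $H(I)$: the coefficients lie in $L^2(I)$ for the first, second, and fourth terms, while the third term is controlled by pairing the $L^1$-coefficient $|u''|^2u'/|\gamma_u'|^7$ with $(v-u)'\in L^\infty$ via $H^2\hookrightarrow C^1$. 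Approximating $v$ by $v_n\to v$ in $H(I)$ therefore passes $F(\tau;v_n)\geq 0$ to the limit and delivers \eqref{eq:Radon}.

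The main technical delicacy is in the first step: the test function $v_h^\varepsilon$ must simultaneously satisfy the pointwise obstacle condition and possess an $L^2$-in-time weak derivative, and these together force a genuinely $C^1$-in-time cut-off rather than a sharp indicator of $(\tau,\tau+h)$. Once this construction is in place, the remainder is a routine combination of dominated convergence, Lebesgue differentiation, and continuous extension by density.
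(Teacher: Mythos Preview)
Your argument is correct and follows a genuinely different route from the paper's. Both proofs share the first step: insert the convex combination $u+\phi(t)(v-u)$ into \eqref{eq:VI} with a compactly supported time cut-off, exploit that $(v-u)\mapsto\phi(v-u)$ commutes with spatial derivatives, and pass to the limit to obtain $\int_\tau^{\tau+h}F(t;v)\,dt\ge 0$ (the paper uses a trapezoidal $\phi_\varepsilon\in H^1(0,T)$ rather than a $C^1$ bump, but this is immaterial). The divergence is in the second step. You use separability of $K\subset H(I)$ to pick a countable dense family $\{v_n\}$, apply Lebesgue differentiation separately for each $v_n$, intersect the resulting full-measure sets, and then extend to all $v\in K$ by the $H(I)$-continuity of $w\mapsto F(\tau;w)$ at each fixed $\tau\in S$. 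The paper instead avoids density in $K$ altogether: it approximates $u$ and $\partial_t u$ by continuous-in-time functions $\zeta_n,\xi_n$, bounds the Lebesgue-differentiation defect of $f_w$ by the Hardy--Littlewood maximal function of an error $h$ that is independent of $w$, and then uses the weak-type maximal inequality together with Borel--Cantelli to produce a single null set $\tilde X_T$ outside of which \eqref{eq:Radon} holds for every $w\in K$ simultaneously. Your approach is more elementary and transparent; the paper's approach is heavier but yields the $w$-independence of the exceptional set by a direct analytic estimate rather than through an abstract density-and-continuity extension.
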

\if0
We illustrate a strategy of the proof: 
First deduce from \eqref{eq:VI} with a special choice of a test function in $K_T$ that 
\begin{align}
\begin{split}\label{eq:VI_before_Lebesgue}
\frac{1}{2r}\int_{\tau-r}^{\tau+r}\int_I \bigg[\frac{\partial_tu (w-u)}{|\gamma_u'|} + 2\frac{u'' (w-u)''}{|\gamma_u'|^5}  -5\frac{|u''|^2u' (w-u)'}{|\gamma_u'|^7}& \\-\lambda \frac{u'' (w-u)}{|\gamma_u'|^3}  &\bigg]\mathrm{d}x\mathrm{d}t \geq0
\end{split}
\end{align}
for all $\tau\in(0,T)$, all $w\in K$, and all (sufficiently small) $r>0$. 
Then, applying Lebesgue's Differentiation Theorem to the above inequality yields \eqref{eq:Radon}. 
The independence of the Lebesgue set $S\subset (0,T)$ on the choice of test functions $w$ is ensured by the fact that $w$ is a function of $x\in I$, not of $t\in(0,T)$.
The detailed proof is postponed to Appendix~\ref{sect:appendixA}.
\fi
\begin{proof}
Fix $w\in K$ arbitrarily and set 
\[
f_w(t):=\int_I \bigg[\frac{\partial_tu}{|\gamma_u'|} (w-u) + 2\frac{u'' (w-u)''}{|\gamma_u'|^5}  -5\frac{|u''|^2u' (w-u)'}{|\gamma_u'|^7} -\lambda \frac{u'' (w-u)}{|\gamma_u'|^3}\bigg]\mathrm{d}x, 
\]
which makes sense for a.e.\ $t\in(0,T)$. 
In addition, from the fact that $u\in K_T$ we have $f_w \in L^1(0,T)$.
Let $\tau \in(0,T)$ and $r>0$ such that $(\tau-r,\tau+r)\subset (0,T)$. We claim that
\begin{align}
\begin{split}\label{eq:VI_before_Lebesgue}
J:=\frac{1}{2r}\int_{\tau-r}^{\tau+r}\int_I \bigg[\frac{\partial_tu (w-u)}{|\gamma_u'|} + 2\frac{u'' (w-u)''}{|\gamma_u'|^5}  -5\frac{|u''|^2u' (w-u)'}{|\gamma_u'|^7}
-\lambda \frac{u'' (w-u)}{|\gamma_u'|^3} & \bigg]\mathrm{d}x\mathrm{d}t \geq0.
\end{split}
\end{align}
Set $\tau_1:=\tau-r$ and $\tau_2:=\tau+r$. 
For $\varepsilon>0$, define $\phi_\varepsilon \in H^1(0,T;[0,1])$ by
\begin{align*}
\phi_\varepsilon(t):=
\begin{cases}
\frac{1}{\varepsilon}(t-\tau_1) +1 \ \ &\text{if } \ t\in[\tau_1-\varepsilon,\tau_1], \\
1 &\text{if } \ t\in[\tau_1,\tau_2], \\
-\frac{1}{\varepsilon}(t-\tau_2) +1 \ \ &\text{if } \ t\in[\tau_2,\tau_2+\varepsilon], \\
0 &\text{otherwise}.
\end{cases}
\end{align*}
Note that since $0\leq \phi_\varepsilon(t)\leq 1$, we have $\phi_\varepsilon w + (1-\phi_\varepsilon)u \in K_T$. 
Thus, by choosing $v=\phi_\varepsilon w + (1-\phi_\varepsilon)u$ in \eqref{eq:VI}, we obtain 
\begin{align}\label{eq:J_sum}
    J+J_1+J_2\geq0, 
\end{align} 
where $J_1:=\frac{1}{2r}\int_{\tau_1-\varepsilon}^{\tau_1}f_w(t)\;\mathrm{d}t$, and $J_2:=\frac{1}{2r}\int_{\tau_2}^{\tau_2+\varepsilon}f_w(t)\;\mathrm{d}t$. 
Since $f_w \in L^1(0,T)$, we have $J_1, J_2 \to 0$ as $\varepsilon\to0$. 
Therefore, passing the limit $\varepsilon\to0$ in \eqref{eq:J_sum}, we obtain \eqref{eq:VI_before_Lebesgue}. 

Now we turn to \eqref{eq:Radon}. 
For each $n\in\mathbf{N}$ we define $X_{T,n}\subset[0,T]$ by 
\begin{align*}
    X_{T,n}:= \Big\{ t\in[0,T] \,\Big|\, \limsup_{r\to 0} \Big|\frac{1}{2r} \int_{t-r}^{t+r}f_w(\tau)\;\mathrm{d}\tau -f_w(t)\Big| \geq \frac{1}{n} \Big\}. 
\end{align*}
Since $u\in L^\infty(0,T;H^2(I)) \cap H^1(0,T;L^2(I))$, by density for each $n\in\mathbf{N}$ there exist $\zeta_n \in C([0,T];H^2(I))$ and $\xi_n \in C([0,T];L^2(I))$ such that 
\begin{align}\label{eq:Bochner_dense}
\|u-\zeta_n\|_{L^2(0,T;H^2(I))} < \frac{1}{n2^n} \quad\text{and}\quad \|\partial_tu-\xi_n\|_{L^2(0,T;L^2(I))} < \frac{1}{n2^n}.
\end{align}
Then, from the fact that $u\in C([0,T];C^1(\bar{I}))$, we find that $g_w:[0,T]\to\mathbf{R}$ defined by
\[
g_w(t):=\int_I \bigg[\frac{\xi_n}{|\gamma_u'|}(w-u) + 2\frac{\zeta_n'' (w-\zeta_n)''}{|\gamma_u'|^5}  -5\frac{|\zeta_n''|^2u' (w-u)'}{|\gamma_u'|^7} -\lambda \frac{\zeta_n'' (w-u)}{|\gamma_u'|^3}\bigg]\mathrm{d}x
\]
is continuous on $[0,T]$.
The continuity of $g_w$ implies that
\[
\lim_{r\to0} \frac{1}{2r}\int_{t-r}^{t+r}g_w(\tau)\;\mathrm{d}\tau =g_w(t) \quad \text{for all} \ \ t\in(0,T). 
\]
Moreover, a straightforward calculation yields 
\begin{align*}
   |f_w(t)-g_w(t)| \leq C_* \Big(\|u(\cdot,t)-\zeta_n(\cdot,t)\|_{H^2(I)} &+ \|u(\cdot,t)-\zeta_n(\cdot,t)\|_{H^2(I)}^2\\
   &+\|\partial_tu(\cdot,t)-\xi_n(\cdot,t)\|_{L^2(I)}\Big)
\end{align*}
with a constant $C_*>0$ depending only on $T$, $\|u\|_{L^\infty(0,T;H^2(I))}$, and $\|w\|_{H^2(I)}$.
Now define the function $h:[0,T]\to\mathbf{R}_{\geq0}$ by
\[
h(t):=\|u(\cdot,t)-\zeta_n(\cdot,t)\|_{H^2(I)}+\|u(\cdot,t)-\zeta_n(\cdot,t)\|_{H^2(I)}^2+\|\partial_tu(\cdot,t)-\xi_n(\cdot,t)\|_{L^2(I)}. 
\]
Note that $h$ extended by $0$ outside $[0,T]$ belongs to $L^1_{\rm loc}(\mathbf{R})$. 
Using this $h$ we have
\begin{align*}
    \bigg|\frac{1}{2r}& \int_{t-r}^{t+r}f_w(\tau)\;\mathrm{d}\tau -f_w(t)\bigg| \\ 
    &\leq \frac{1}{2r} \int_{t-r}^{t+r}|f_w(\tau)-g_w(\tau)|\;\mathrm{d}\tau + \bigg|\frac{1}{2r} \int_{t-r}^{t+r}g_w(\tau)\;\mathrm{d}\tau -g_w(t)\bigg| +|f_w(t)-g_w(t)| \\
    &\leq \frac{1}{2r} \int_{t-r}^{t+r}C_*h(\tau)\;\mathrm{d}\tau + \bigg|\frac{1}{2r} \int_{t-r}^{t+r}g_w(\tau)\;\mathrm{d}\tau -g_w(t)\bigg| +C_*h(t), 
\end{align*}
from which it follows that 
\[
\limsup_{r\to0} \bigg|\frac{1}{2r} \int_{t-r}^{t+r}f_w(\tau)\;\mathrm{d}\tau -f_w(t)\bigg| 
\leq C_*\mathcal{M}(h)(t) +C_*h(t),
\]
where $\mathcal{M}(h)$ denotes the Hardy--Littlewood maximal function of $h$, i.e.
\[\mathcal{M}(h)(t):=\sup_{r>0}\frac{1}{2r}\int_{t-r}^{t+r}h(\tau)\;\mathrm{d}\tau.\] 
Hence, for every $n\in\mathbf{N}$, we have
\begin{align}\label{eq:est_X_Tn_depend_on_v}
X_{T,n} \subset \Set{ t\in[0,T] | \mathcal{M}(h)(t) >\tfrac{1}{2C_*n} } \cup \Set{ t\in[0,T] | h(t) >\tfrac{1}{2C_*n} }. 
\end{align}
Now we write 
\[
Y_{T,n}:=\Set{ t\in[0,T] | \mathcal{M}(h)(t) >\tfrac{1}{2C_*n} } \ \ \text{and} \ \  Z_{T,n}:=\Set{ t\in[0,T] | h(t) >\tfrac{1}{2C_*n} }, 
\]
both of which are independent of $w\in K$.
Recall from \eqref{eq:Bochner_dense} that 
$
\|h\|_{L^1(\mathbf{R})} \leq {C}/{n2^n}
$. 
Combining this with the weak maximal inequality for $L^1$-functions, we have 
\begin{align}\label{eq:est_Y_Tn}
|Y_{T,n}| \leq 2 C n \|h\|_{L^1(\mathbf{R})} \leq \frac{C}{2^n}.     
\end{align}
On the other hand, by Chebyshev's inequality it follows that 
\begin{align}\label{eq:est_Z_Tn}
|Z_{T,n}| \leq 2 C n \|h\|_{L^1(\mathbf{R})} \leq \frac{C}{2^{n}}. 
\end{align}
Noting that $\tilde{X}_{T,n}:=Y_{T,n} \cup Z_{T,n}$ is increasing with respect to $n$, we have
\[
 \bigcap_{n=1}^\infty \bigcup_{n\geq k}^\infty \tilde{X}_{T,k} = \bigcup_{n\in\mathbf{N}}\tilde{X}_{T,n} =: \tilde{X}_{T}. 
\]
As $\sum_{n=1}^\infty|\tilde{X}_{T,n}|<\infty$ holds by \eqref{eq:est_Y_Tn} and \eqref{eq:est_Z_Tn}, we may apply Borel--Cantelli lemma and conclude that $|\tilde{X}_{T}|=0$. 
Since \eqref{eq:est_X_Tn_depend_on_v} yields $\bigcup_{n\in\mathbf{N}} X_{T,n} \subset \tilde{X}_{T}$, we deduce from  definition of $X_{T,n}$ that \eqref{eq:Radon} holds for all $t\in (0,T)\setminus \tilde{X}_{T}=:S$.
Moreover, $S$ is independent of $w\in K$ as is $\tilde{X}_{T}$. 
 \end{proof}

\subsection{Consistency under smoothness assumptions}\label{subsect:bridge}

The aim of this subsection is to discuss whether a weak solution can retrieve all the conditions in \eqref{eq:P} as well as the same properties as given in \eqref{eq:energy_structure}. 
Throughout this subsection we suppose that a weak solution $u$ of \eqref{eq:P} is smooth and write $\gamma_u(x,t)=(x,u(x,t))$. 
Condition (i) in Definition~\ref{def:weak_sol} automatically implies (i) and (iii) in \eqref{eq:P}.
Note that if a weak solution $u$ is smooth, then $u$ satisfies \eqref{eq:Radon} for all $t\in(0,T]$.  

Hereafter fix $t\in(0,T]$ arbitrarily.
As a preliminary step, we show that
\begin{align}\label{eq:E-Leq_above_obst}
\int_I \bigg(\frac{\partial_tu}{|\gamma_u'|}+2\frac{1}{|\gamma_u'|}\Big(\frac{k_u'}{|\gamma_u'|}\Big)' + k_u^3 -\lambda k_u \bigg)\varphi\;\mathrm{d}x \ \textcolor{black}{=0} \quad \text{for all} \ \varphi \in C^\infty_{\rm c}(\mathcal{N}_t). 
\end{align}
Fix an arbitrary $\varphi \in C^\infty_{\rm c}(\mathcal{N}_t)$. 
If $\varepsilon_0>0$ is sufficiently small, then we find that $w=u(\cdot,t)+\varepsilon \varphi \in K$ for any $\varepsilon \in (-\varepsilon_0, \varepsilon_0)$. 
Thus \eqref{eq:Radon} implies that 
\[
\varepsilon\int_I \bigg(\frac{\partial_tu \varphi}{|\gamma_u'|} + 2\frac{u'' \varphi''}{|\gamma_u'|^5} -5\frac{|u''|^2u' \varphi'}{|\gamma_u'|^7} -\lambda \frac{u'' \varphi}{|\gamma_u'|^3} \bigg)\mathrm{d}x \geq0. 
\]
This inequality is valid for all sufficiently small $\varepsilon$, both positive and negative, and so in fact 
\begin{align}\label{eq:E-Leq_above_obst_notyet}
\int_I \bigg(\frac{\partial_tu \varphi}{|\gamma_u'|} + 2\frac{u'' \varphi''}{|\gamma_u'|^5} -5\frac{|u''|^2u' \varphi'}{|\gamma_u'|^7} -\lambda \frac{u'' \varphi}{|\gamma_u'|^3} \bigg)\mathrm{d}x =0.
\end{align}
Note that the signed curvature $k_u$ of $\gamma_u$ is given by $k_u=u''/|\gamma_u'|^3$.
Then integration by parts yields 
\begin{align}
\int_I \bigg(2\frac{u'' \varphi''}{|\gamma_u'|^5} -5\frac{|u''|^2u' \varphi'}{|\gamma_u'|^7} \bigg)\mathrm{d}x &= \int_I \bigg(2\frac{k_u\varphi''}{|\gamma_u'|^2} -5\frac{k_u^2 u' \varphi'}{|\gamma_u'|} \bigg)\mathrm{d}x \label{eq:250524-1}\\
&=\int_I \bigg(-2\frac{k_u' \varphi'}{|\gamma_u'|^2} -\frac{k_u^2 u' \varphi'}{|\gamma_u'|} \bigg)\mathrm{d}x \notag\\
&=\int_I \bigg(2\frac{1}{|\gamma_u'|}\Big(\frac{k_u'}{|\gamma_u'|}\Big)' -2\frac{u'' u'}{|\gamma_u'|^3}\frac{k_u'}{|\gamma_u'|} \notag \\ 
&\hspace{33pt}+ \frac{2k_u k_u' u'}{|\gamma_u'|}  + \frac{k_u^2 u''}{|\gamma_u'|} - \frac{k_u^2 u'' (u')^2}{|\gamma_u'|^3}\bigg)\varphi\;\mathrm{d}x \notag \\
&=\int_I \bigg(2\frac{1}{|\gamma_u'|}\Big(\frac{k_u'}{|\gamma_u'|}\Big)' + k_u^3 \bigg)\varphi\;\mathrm{d}x. \notag
\end{align}
Together with \eqref{eq:E-Leq_above_obst_notyet} this gives \eqref{eq:E-Leq_above_obst}. 

Here let us show that the curvature vector $\kappa_u$ of $\gamma_u$ satisfies \eqref{eq:P}-(iv), i.e.\
\begin{align}\label{eq:NavierBC_smooth}
\kappa_u(0,t)=\kappa_u(1,t)=0 \quad \text{for all} \ \ t\in(0,T]. 
\end{align}
Fix $t\in(0,T]$. 
Since $u(0,t)=0>\psi(0)$ and $u(\cdot,t), \psi \in C(\bar{I})$ by \eqref{eq:psi_condition}, there exists $\delta>0$ such that $u(x,t)>\psi(x)$ for all $x\in[0,\delta]$. 
Choose $\varphi \in C^2([0,\delta])$ such that $\varphi\geq0$, $\varphi(0)=0$, $\varphi'(0)=1$, and $\supp \varphi \subset[0,\delta)$. 
If $\varepsilon_0>0$ is sufficiently small, then we find that $w=u(\cdot,t)+\varepsilon \varphi \in K$ for any $\varepsilon \in (-\varepsilon_0, \varepsilon_0)$. 
Thus \eqref{eq:Radon} implies that 
\[
\varepsilon\int_I \bigg(\frac{\partial_tu \varphi}{|\gamma_u'|} + 2\frac{u'' \varphi''}{|\gamma_u'|^5} -5\frac{|u''|^2u' \varphi'}{|\gamma_u'|^7} -\lambda \frac{u'' \varphi}{|\gamma_u'|^3} \bigg)\mathrm{d}x \geq0. 
\]
Since this inequality holds for both positive and negative $\varepsilon$, similar to \eqref{eq:E-Leq_above_obst_notyet} we have 
\begin{align}\label{eq:250528-1}
\int_I \bigg(\frac{\partial_tu \varphi}{|\gamma_u'|} + 2\frac{u'' \varphi''}{|\gamma_u'|^5} -5\frac{|u''|^2u' \varphi'}{|\gamma_u'|^7} -\lambda \frac{u'' \varphi}{|\gamma_u'|^3} \bigg)\mathrm{d}x =0. 
\end{align}
Following the same argument as in \eqref{eq:250524-1}, we compute 
\begin{align*}
\begin{split}
    & \int_I \bigg(2\frac{u'' \varphi''}{|\gamma_u'|^5} -5\frac{|u''|^2u' \varphi'}{|\gamma_u'|^7}  \bigg)\mathrm{d}x \\
    =&\int_{[0,\delta]} \bigg(2\frac{1}{|\gamma_u'|}\Big(\frac{k_u'}{|\gamma_u'|}\Big)'+k_u^3\bigg)\varphi\;\mathrm{d}x+\bigg[2\frac{k_u \varphi'}{|\gamma_u'|^2} -2\frac{k_u'\varphi}{|\gamma_u'|^2}-\frac{k_u^2 u'\varphi}{|\gamma_u'|}\bigg]_{x=0}^{x=\delta} \\
    =&\int_{[0,\delta]} \bigg(2\frac{1}{|\gamma_u'|}\Big(\frac{k_u'}{|\gamma_u'|}\Big)'+k_u^3\bigg)\varphi\;\mathrm{d}x+2\frac{k_u(0)}{|\gamma_u'(0)|^2}. 
\end{split}
\end{align*}
Thus \eqref{eq:250528-1} implies 
\begin{align*}\notag 
\int_{[0,\delta]} \bigg(\frac{\partial_tu}{|\gamma_u'|}+2\frac{1}{|\gamma_u'|}\Big(\frac{k_u'}{|\gamma_u'|}\Big)' + k_u^3 -\lambda k_u \bigg)\mathrm{d}x +2\frac{k_u(0)}{|\gamma_u'(0)|^2}=0. 
\end{align*}
In addition, noting that $(0,\delta] \subset \mathcal{N}_t$, we deduce from \eqref{eq:E-Leq_above_obst} that 
\[
\frac{\partial_tu}{|\gamma_u'|}+2\frac{1}{|\gamma_u'|}\Big(\frac{k_u'}{|\gamma_u'|}\Big)' + k_u^3 -\lambda k_u  =0 \quad \text{on} \ \ (0,\delta).
\]
As a consequence it follows that $k_u(0,t)=0$, which also means that $\kappa_u(0,t)=0$. 
Similarly we obtain $\kappa_u(1,t)=0$, and hence \eqref{eq:NavierBC_smooth} has been proven.

Next we check \eqref{eq:P}-(ii).  
Now we fix an arbitrary $\eta\in C^\infty_{\rm c}(\mathcal{N}_t;\mathbf{R}^2)$. 
With the choice of $\varphi=-u' \eta^1$ and $\varphi=\eta^2$ in \eqref{eq:E-Leq_above_obst}, we obtain
\begin{align*}
    \int_I \frac{\partial_tu}{|\gamma_u'|}\frac{(-u') \eta^1}{|\gamma_u'|} |\gamma_u'|\;\mathrm{d}x &= - \int_I \bigg[ 2\frac{1}{|\gamma_u'|}\Big(\frac{k_u'}{|\gamma_u'|}\Big)' + k_u^3 -\lambda k_u \bigg]\frac{(-u')\eta^1}{|\gamma_u'|} |\gamma_u'|\;\mathrm{d}x, \\
    \int_I \frac{\partial_tu}{|\gamma_u'|}\frac{\eta^2}{|\gamma_u'|} |\gamma_u'|\;\mathrm{d}x &= - \int_I \bigg[ 2\frac{1}{|\gamma_u'|}\Big(\frac{k_u'}{|\gamma_u'|}\Big)' + k_u^3 -\lambda k_u \bigg]\frac{\eta^2}{|\gamma_u'|} |\gamma_u'|\;\mathrm{d}x.
\end{align*}
Adding them, and taking into account that the unit normal vector $\nu$ is given by $\nu=|\gamma_u'|^{-1}(-u',1)$, we have 
\begin{align}\label{eq:psuedo-eq:P'}
    \int_I \frac{\partial_tu}{|\gamma_u'|}\langle \nu,\eta \rangle \;\mathrm{d}s = - \int_I \bigg[ 2\frac{1}{|\gamma_u'|}\Big(\frac{k_u'}{|\gamma_u'|}\Big)' + k_u^3 -\lambda k_u \bigg]\langle \nu,\eta \rangle \;\mathrm{d}s, 
\end{align}
where $\mathrm{d}s:=|\gamma_u'|\;\mathrm{d}x$. 
Noting that $\partial_t \gamma_u = (0,\partial_t u)$, we have
\begin{align}\label{eq:normal_velocity}
     \partial_t^\perp \gamma_u = \langle \partial_t \gamma_u, \nu \rangle \nu =\frac{\partial_tu}{|\gamma_u'|}\nu. 
\end{align}
On the other hand, recall from \cite[Lemma A.1]{DP14} that $L^2(\mathrm{d}s)$-gradient $\nabla_{L^2}\mathcal{E}_\lambda[\gamma]$ of $\mathcal{E}_\lambda$ at $\gamma:I\to\mathbf{R}$ is given by
\[
\big\langle \nabla_{L^2}\mathcal{E}_\lambda[\gamma], \eta\big\rangle 
=\int_I \big\langle 2\nabla_s^2\kappa+|\kappa|^2\kappa-\lambda \kappa, \eta\big\rangle\;\mathrm{d}s
=\int_I \big(2k_{ss}+k^3-\lambda k\big) \langle\nu, \eta\rangle\;\mathrm{d}s. 
\]
Therefore, \eqref{eq:psuedo-eq:P'} implies that 
\[
\big\langle \partial_t^\perp\gamma_u, \eta \big\rangle_{L^2(\mathrm{d}s)} = \big\langle -\nabla_{L^2}\mathcal{E}_\lambda[\gamma_u], \eta\big\rangle, \quad \text{for all}\ \ \eta \in C^\infty_{\rm c}(\mathcal{N}_t;\mathbf{R}^2), 
\]
where in the left-hand side $\langle f, g\rangle_{L^2(\mathrm{d}s)} := \int_I \langle f, g\rangle \;\mathrm{d}s$.
Thus in this sense we observe that $\gamma_u$ satisfies \eqref{eq:P}-(ii).

\begin{remark} \label{rem:normal_velocity}
In this remark, we discuss the relationship between equation \eqref{eq:P}-(ii) and the original equation \eqref{eq:elastic_flow} of elastic flow.
In general, if a smooth one-parameter family $\gamma:\bar{I}\times [0,T]\to\mathbf{R}^2$ satisfies $\partial_t^\perp\gamma=-\nabla_{L^2} \mathcal{E}_\lambda[\gamma]$, then, after a suitable reparametrization, the normal velocity $\partial_t^\perp\gamma$ can be identified with the full velocity $\partial_t \gamma$. 
While for more details we refer the reader to \cite[Section 2.4]{MPP21}, in the following we instead give the outline of the justification: 
let $\theta:=\langle \partial_t \gamma, \partial_s \gamma \rangle$ denotes the tangential velocity, and let $\phi:\bar{I}\times [0,T]\to \bar{I}$ be the smooth solution of 
\[
\begin{cases}
\partial_t \phi (x,t) = -\frac{\theta(\phi(x,t),t)}{|\partial_x \gamma(\phi(x,t),t)|} \\
\phi(x,0)=x
\end{cases}
\] 
(such $\phi$ exists by classical ODE theory). 
Then, if $\gamma$ is smooth and it is a solution of $\partial_t^\perp\gamma=-\nabla \mathcal{E}_\lambda[\gamma]$, we find that the reparametrization 
$ 
\bar{\gamma}(x,t) := \gamma(\phi(x,t),t) 
$ 
is a solution of \eqref{eq:elastic_flow} (see e.g.\ \cite[proof of Theorem 4.1]{RS24}).
This is based on the fact that the energy functional $\mathcal{E}_\lambda$ is invariant under reparametrization so its $L^2$-gradient lies in the normal direction. As a result, the tangential component of the velocity does not contribute to the $L^2$ inner product with the gradient, and hence does not affect the energy dissipation.

On the other hand, in our problem we consider a family of \emph{graphical} curves, where the normal velocity and the full velocity do not generally coincide under the original parameterization.
Based on this consideration, we use the normal velocity formulation \eqref{eq:P}-(ii) as the appropriate weak form of the elastic flow \eqref{eq:elastic_flow}.

In addition, due to the obstacle constraint it is not clear whether weak solutions of \eqref{eq:P} can retrieve the regularity which enables the above argument (see Section~\ref{subsect:regularity}). 
A similar situation arises in the so-called \emph{$p$-elastic flow}.
The $p$-elastic flow is interpreted as the $L^2$-gradient flow of $E_p:=\mathcal{B}_p+\lambda \mathcal{L}$, where $\mathcal{B}_p[\gamma]:=\int_\gamma |\kappa|^p\;\mathrm{d}s$ ($p>1$). 
Due to the nonlinearity of the curvature term, the regularity of solutions to the $p$-elastic flow is delicate.
For instance, to address regularity issues in the analysis of weak solutions, Blatt--Hopper--Vorderobermeier \cite{BHV} introduced a weak formulation of the $p$-elastic flow based on the normal velocity, i.e.\ a weak form of $(\partial_t \gamma)^\perp = -\nabla_{L^2} E_p[\gamma]$.
\end{remark}

Now we turn to \eqref{eq:energy_structure}.
It immediately follows that 
\begin{align*}
    \frac{d}{dt}\mathcal{L}[\gamma_u(t)] = \int_I \frac{u' \partial_tu'}{|\gamma_u'|}\;\mathrm{d}x = \bigg[\frac{u' \partial_t u}{|\gamma_u'|}\bigg]_{x=0}^{x=1} -\int_I k_u{\partial_tu}\;\mathrm{d}x. 
\end{align*}
On the other hand, similar to \eqref{eq:250524-1} we compute 
\begin{align*} 
\begin{split}
    \frac{d}{dt}\mathcal{B}[\gamma_u(t)] &= \frac{d}{dt}\int_I \frac{(u'')^2}{|\gamma_u'|^5}\;\mathrm{d}x 
    =\int_I \bigg(2\frac{u'' \partial_t u''}{|\gamma_u'|^5}-5\frac{(u'')^2u' \partial_t u'}{|\gamma_u'|^7}\bigg)\mathrm{d}x \\
    &=\int_I \bigg(2\frac{1}{|\gamma_u'|}\Big(\frac{k_u'}{|\gamma_u'|}\Big)'+k_u^3\bigg)\partial_t u\;\mathrm{d}x+\bigg[2\frac{k_u \partial_t u'}{|\gamma_u'|^2} -2\frac{k_u'\partial_t u}{|\gamma_u'|^2}-\frac{k_u^2 u'\partial_t u}{|\gamma_u'|}\bigg]_{x=0}^{x=1}.
\end{split}
\end{align*}
Therefore, we obtain 
\begin{align}\label{eq:250526-1}
\begin{split}
    \frac{d}{dt}\mathcal{E}_\lambda[\gamma_u(t)] &=\int_I \bigg(2\frac{1}{|\gamma_u'|}\Big(\frac{k_u'}{|\gamma_u'|}\Big)'+k_u^3 - \lambda k_u\bigg)\partial_t u\;\mathrm{d}x \\
    &\qquad +\bigg[2\frac{k_u \partial_t u'}{|\gamma_u'|^2} -2\frac{k_u'\partial_t u}{|\gamma_u'|^2}-\frac{k_u^2 u'\partial_t u}{|\gamma_u'|}-\lambda\frac{u' \partial_t u}{|\gamma_u'|}\bigg]_{x=0}^{x=1}.
\end{split}
\end{align}
By \eqref{eq:NavierBC_smooth} we have $k_u(0,t)=k_u(1,t)=0$, and since $u(\cdot,t) \in H^1_0(I)$ for each $t\in[0,T]$, it also follows that $\partial_t u|_{x=0}=\partial_t u|_{x=1}=0$. 
Thus the boundary terms in the right-hand side of \eqref{eq:250526-1} are zero. 
Further, for any $x\in I\setminus \mathcal{N}_t$ and for all $h>0$ we have $u(x,t+h)\geq \psi(x) = u(x,t)$, so that $\partial_t u(x,t) = \lim_{h\downarrow0}\frac{1}{h}(u(x,t+h)-u(x,t)) \geq0$.
Similarly it follows that $\partial_t u(x,t) = \lim_{h\uparrow0}\frac{1}{h}(u(x,t+h)-u(x,t)) \leq0$, and hence 
\begin{align}\label{eq:pd_t=0_noncoincidence}
    \partial_t u(x,t) =0 \quad \text{for all} \ \ x\in I\setminus \mathcal{N}_t. 
\end{align}
Thus \eqref{eq:250526-1} is reduced to
\[
\frac{d}{dt}\mathcal{E}_\lambda[\gamma_u(t)] =\int_{\mathcal{N}_t} \bigg(2\frac{1}{|\gamma_u'|}\Big(\frac{k_u'}{|\gamma_u'|}\Big)'+k_u^3 - \lambda k_u\bigg)\partial_t u\;\mathrm{d}x. 
\]
Together with \eqref{eq:E-Leq_above_obst} this yields
\begin{align*}
\frac{d}{dt}\mathcal{E}_\lambda[\gamma_u(t)] =\int_{\mathcal{N}_t} -\frac{\partial_t u}{|\gamma_u'|}\partial_t u\;\mathrm{d}x = -\int_{I} \big|\partial_t^\perp\gamma_u\big|^2\;\mathrm{d}s, 
\end{align*}
where in the last equality we used \eqref{eq:normal_velocity} and \eqref{eq:pd_t=0_noncoincidence}.
Thus \eqref{eq:energy_structure} is now proven.

\subsection{Loss of regularity}\label{subsect:regularity}
In contrast to the previous argument, it is not easy to ensure that weak solutions of \eqref{eq:P} have the regularity that enables the argument in Section~\ref{subsect:bridge}. 
It is already known that a loss of regularity may occur at the level of stationary solutions.
If $u=u_0\in K$ is a solution to the variational inequality 
\begin{align}\label{eq:VI_station}
\begin{split}
    \int_I \bigg[2\frac{u'' (v-u)''}{|\gamma_u'|^5} 
    &-5\frac{|u''|^2u'  (v-u)'}{|\gamma_u'|^7}  
     -\lambda \frac{u'' (v-u)}{|\gamma_u'|^3}  \bigg]\mathrm{d}x  \geq0  \quad \text{for all} \ \ v\in K,
\end{split}
\end{align}
then $u$ is a stationary solution of \eqref{eq:P}. 
In the case of $\lambda=0$, the regularity of solutions to the above variational inequality is well studied. 
Indeed, it is shown in \cite[Theorem 5.1]{DD18} that every solution to \eqref{eq:VI_station} can lie in $W^{3,\infty}(I)$. 
This regularity is optimal in the following sense: the regularity of the unique symmetric solution of \eqref{eq:VI_station} cannot be improved to $C^3({I})$ if $\psi$ is a symmetric cone obstacle (i.e.\ $\psi=\psi(1-\cdot)$ and $\psi|_{[0,\frac{1}{2}]}$ is affine linear) such that $\psi(\frac{1}{2})<h_*$ (cf.\ \cite[Theorem 1.2]{Ysima}). 
Thus, by choosing such an obstacle $\psi$ and the solution of \eqref{eq:VI_station} as an initial datum, we find that a weak solution $u\in K_T$ of \eqref{eq:P} \emph{never} satisfies $u(\cdot,t) \in C^3({I})$. 

We proceed to describe the loss of regularity in precise detail in the case of $\lambda=0$. 
If $u$ is a solution of \eqref{eq:VI_station}, then for all $(x_1,x_2) \subset \mathcal{N}:=\{x\in I \mid u(x)>\psi(x)\}$ we can show that $u\in C^\infty([x_1,x_2])$ (cf.\ \cite[Proposition 3.2]{DD18}). 
Moreover, by applying an argument similar to the one used to derive \eqref{eq:E-Leq_above_obst}, we obtain
\begin{align}\label{eq:VI_station_on_N}
    \int_I \bigg[2\frac{1}{|\gamma_u'|} \Big( \frac{k_u'}{|\gamma_u'|} \Big)' + k_u^3 \bigg]\varphi\;\mathrm{d}x  = 0  \quad \text{for all} \ \ \varphi \in C^\infty_{\rm c}(x_1,x_2).
\end{align}
Recalling that $\partial_s = |\gamma_u'|^{-1}\partial_x$, we infer from the above equality that $k_u$ satisfies $2(k_u)_{ss}+k_u^3=0$ above the noncoincidence set $\mathcal{N}$.
In general, a non-trivial curve whose signed curvature $k$ satisfies $2k_{ss}+k^3=0$ is called a \emph{rectangular elastica}, and up to similarity, its signed curvature is given by $k(s)=\sqrt{2}\alpha\cn(\alpha s+s_0, \frac{1}{\sqrt{2}})$ with some $\alpha>0$ and $s_0\in\mathbf{R}$ (see Section~\ref{sect:appendix_Jacobi} for definition of $\cn$).
Using this rectangular elastica, we can identify the unique symmetric solution $u$ of \eqref{eq:VI_station} as 
\begin{align}\label{eq:SCF}
\begin{cases}
        \text{(i) \ \ } \gamma_u \text{ is a part of a rectangular elastica on} \ (0,\frac{1}{2}), \\
        \text{(ii) \ \ } \text{the coincidence set } I\setminus \mathcal{N} = \{\frac{1}{2}\}, \\
        \text{(iii) \,} \text{the third weak derivative $u'''$ is discontinuous at } x=\frac{1}{2} \\
\end{cases}
\end{align}
(see Figure~\ref{fig:SCF}).

This characterization can be seen from the following outline: 
any solution of \eqref{eq:VI_station} must be concave, i.e.\ $u''(x)<0$ for $x\in I$ (see e.g.\ \cite[Theorem 1.3]{DMOY24}).
Combining this fact with the assumption that $\psi$ is a symmetric cone, we can deduce that the coincidence set $I\setminus \mathcal{N}$ must be equal to $\{\tfrac{1}{2}\}$. 
This yields property (ii) in \eqref{eq:SCF}. 
In addition, we can also deduce that the signed curvature $k_u$ of $\gamma_u$ must vanish at the both endpoints (see e.g.\ \cite[Theorem 1.3]{DMOY24}), and hence $k_u$ must satisfy $2\partial_s^2 k_u + k_u^3=0$ on $(0,\frac{1}{2})$ and $k_u(0)=0$. 
Therefore, recalling the concavity of $u$, we find that, up to dilation and reparametrization, $k_u$ is represented by 
\begin{align}\label{eq:rect_curvature}
k(s):=-\sqrt{2} \cn(s -\mathrm{K}(\tfrac{1}{\sqrt{2}}), \tfrac{1}{\sqrt{2}})
\end{align}
(see Section~\ref{sect:appendix_Jacobi} for more details). 
This yields property (i) in \eqref{eq:SCF}. 
Now let $\Gamma_{\rm rect}:[0,2\mathrm{K}(\tfrac{1}{\sqrt{2}})] \to \mathbf{R}^2$ be a curve with curvature  given by \eqref{eq:rect_curvature} such that $\Gamma_{\rm rect}(0)=0$ (see Figure~\ref{fig:SCF}).
With the help of the argument using the so-called polar tangential angle (cf.\ \cite[Section 3.2]{Miura21}), we find that if $\psi(\frac{1}{2})<h_*$, then up to rotation, dilation, and reparametrization, $\gamma_u|_{[0,\frac{1}{2}]}$ is represented by $\Gamma_{\rm rect}|_{[0,s_0]}$ for some $s_0\in(0,\mathrm{K}(\tfrac{1}{\sqrt{2}}))$ (see Figure~\ref{fig:SCF}).
Since \eqref{eq:rect_curvature} implies that $k'(s)<0$ for all $s\in(0,\mathrm{K}(\tfrac{1}{\sqrt{2}}))$, we have $\lim_{x\uparrow\frac{1}{2}}k_u'(x)<0$. 
This also means that $\lim_{x\uparrow\frac{1}{2}}u'''(x)<0$, and simultaneously means discontinuity of $u'''$ at $x=1/2$ --- by symmetry the third derivative of $u$ would vanish at $x=1/2$ if $u$ could lie in $C^3(I)$. 
Thus property (iii) in \eqref{eq:SCF} follows.

\begin{center}
    \begin{figure}[htbp]
      \includegraphics[scale=0.18]{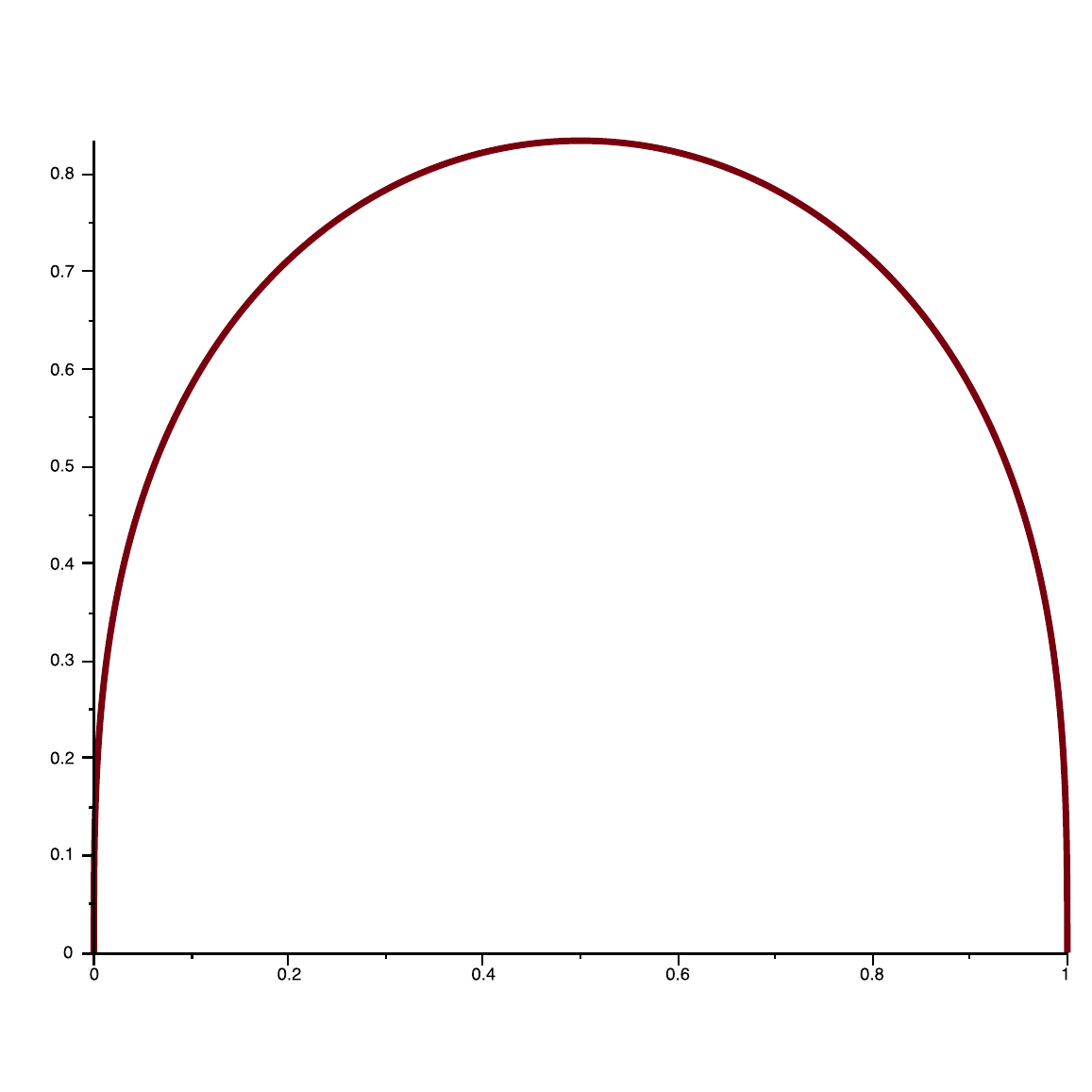}
      \quad
      \includegraphics[scale=0.18]{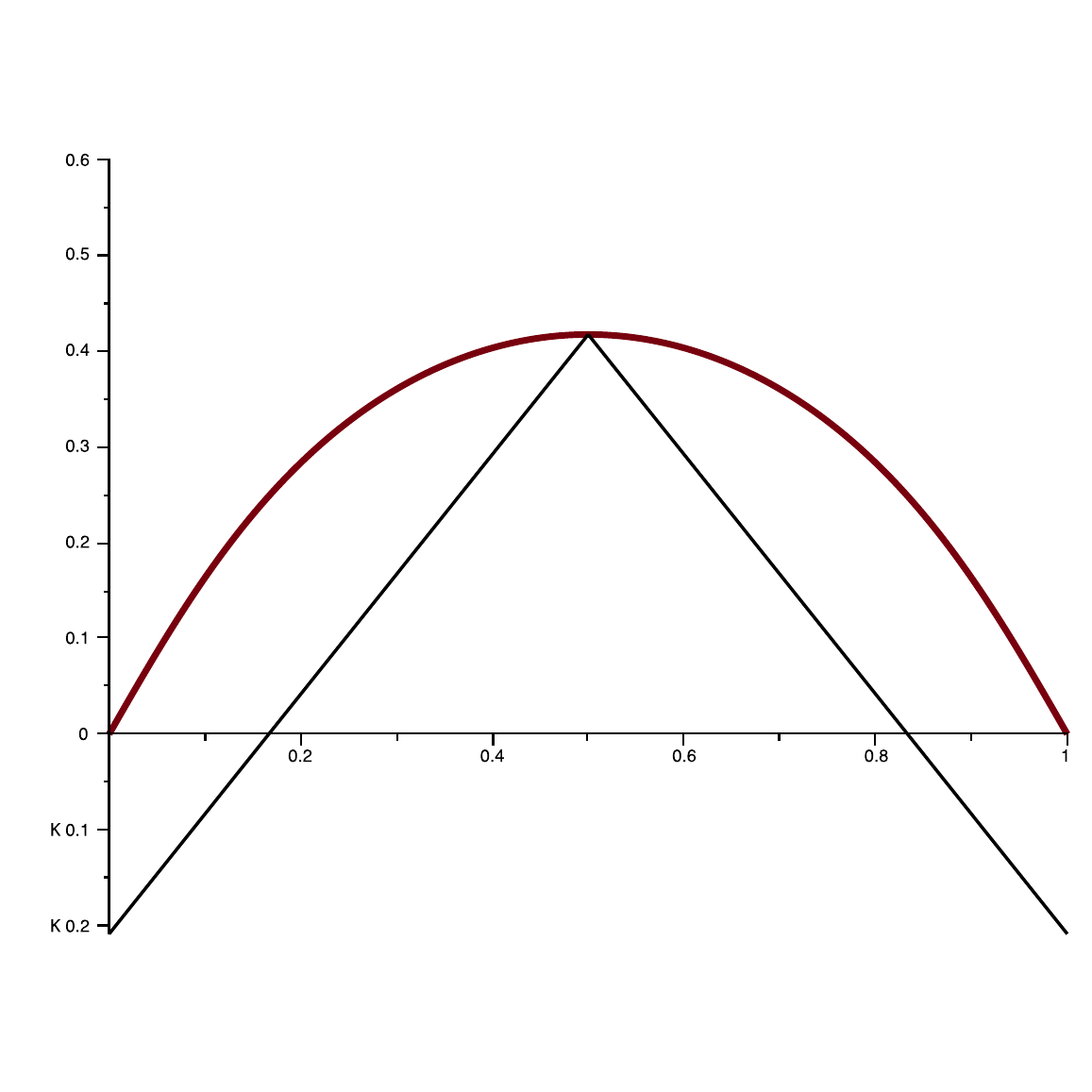}
      \quad
      \includegraphics[scale=0.18]{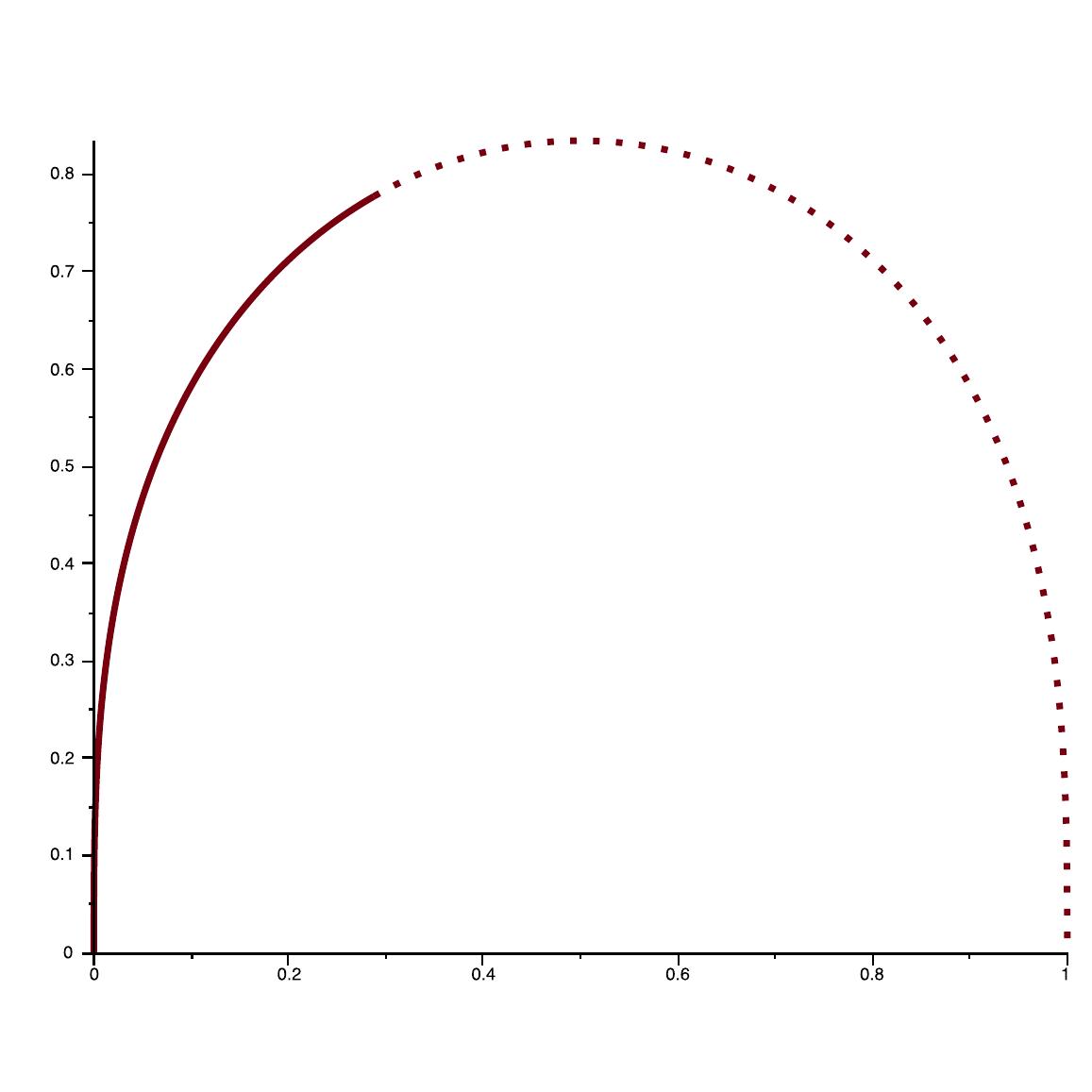} \\
  \caption{A rectangular elastica $\Gamma_{\rm rect}$ (left), the unique symmetric solution $u$ of \eqref{eq:VI_station} with a symmetric cone obstacle whose height is $h_*/2$. The third weak derivative of $u$ is discontinuous at $x=1/2$ (center). A part of $\Gamma_{\rm rect}$ which coincides with $u|_{[0,\frac{1}{2}]}$ in the center, after a suitable rotation and dilation (right).}
  \label{fig:SCF}
  \end{figure}
\end{center}

\section{Existence of a local-in-time solution}\label{sect:existence}
In this section we prove the existence of local-in-time weak solutions of \eqref{eq:P} via minimizing movements. 
We begin with some lemmas. 

\begin{lemma} \label{Gtheorem:2.1}
\begin{align}\label{eq:derivative-estimate}
\|f'\|_{L^\infty(I)}  \leq \sqrt{2}\Vert f\Vert_{L^2(I)}^{\frac{1}{4}}\Vert f\Vert_{H(I)}^{\frac{3}{4}} \quad \text{for}\quad f \in H(I).
\end{align}
In addition, $\|f\|_{L^\infty(I)} \leq \|f'\|_{L^\infty(I)}$ also holds for any $f \in H(I)$.
\end{lemma}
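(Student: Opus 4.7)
The second inequality is immediate: since $f \in H(I) \subset H^1_0(I)$ admits a $C^0(\bar I)$ representative with $f(0)=0$, the fundamental theorem of calculus gives $f(x) = \int_0^x f'(t)\,\mathrm{d}t$, so $|f(x)| \le x\,\|f'\|_{L^\infty(I)} \le \|f'\|_{L^\infty(I)}$ for every $x \in \bar I$, as $I=(0,1)$.

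For the first inequality, my plan is to combine two integration-by-parts arguments tailored to the Dirichlet condition $f(0)=f(1)=0$, yielding in the end an interpolation with the correct scaling exponents $\tfrac14, \tfrac34$. First, since $f(0)=f(1)=0$ and $f$ is (after representative choice) continuous on $\bar I$, Rolle's theorem produces an $x_0 \in I$ with $f'(x_0)=0$. Then for every $x\in \bar I$, the identity
\begin{equation*}
(f'(x))^2 = 2\int_{x_0}^{x} f'(t) f''(t)\,\mathrm{d}t
\end{equation*}
(which one verifies as a standard absolutely continuous computation, using $f' \in H^1(I)$) combined with the Cauchy--Schwarz inequality gives
\begin{equation*}
\|f'\|_{L^\infty(I)}^2 \le 2\,\|f'\|_{L^2(I)}\,\|f''\|_{L^2(I)}.
\end{equation*}

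Next, I will estimate $\|f'\|_{L^2(I)}$ by integration by parts: using $f(0)=f(1)=0$,
\begin{equation*}
\|f'\|_{L^2(I)}^2 = -\int_I f\,f''\,\mathrm{d}x \le \|f\|_{L^2(I)}\,\|f''\|_{L^2(I)},
\end{equation*}
so $\|f'\|_{L^2(I)} \le \|f\|_{L^2(I)}^{1/2}\|f''\|_{L^2(I)}^{1/2}$. Substituting this into the previous bound gives $\|f'\|_{L^\infty(I)}^2 \le 2\,\|f\|_{L^2(I)}^{1/2}\|f''\|_{L^2(I)}^{3/2}$. Taking square roots and recalling $\|f\|_{H(I)} = \|f''\|_{L^2(I)}$ yields \eqref{eq:derivative-estimate}.

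There is no real obstacle here; the argument is entirely elementary. The only subtlety worth mentioning is the justification of the identity for $(f'(x))^2$ and of the integration by parts — both follow from the fact that $f' \in H^1(I) \hookrightarrow C^0(\bar I)$ is absolutely continuous, which is part of the standard Sobolev machinery and requires no special argument in this setting.
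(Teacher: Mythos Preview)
Your proof is correct and essentially identical to the paper's own argument: both use Rolle's theorem to find a zero of $f'$, the identity $(f')^2(x) = 2\int_{x_0}^x f' f''$ together with Cauchy--Schwarz, and the integration-by-parts estimate $\|f'\|_{L^2}^2 \le \|f\|_{L^2}\|f''\|_{L^2}$. The only cosmetic difference is the order in which the two ingredients are presented.
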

\begin{proof}
Since $f(0)=f(1)=0$, integration by part and H\"older's inequality yield 
\begin{equation}
\label{Geq:2.2}
\Vert f'\Vert^2_{L^2(I)} =-\int_I f\cdot f''\;\mathrm{d}x \le \Vert f\Vert_{L^2(I)} \Vert f''\Vert_{L^2(I)}. 
\end{equation}
Using that $f(0)=f(1)=0$ again, we find $x_{0} \in I$ such that $f'(x_0)=0$, and then 
$$
|f'(x)|^2 = \int_{x_0}^x \big(|f'(\xi)|^2\big)'\;\mathrm{d}\xi =\int_{x_{0}}^x 2f'f''\;\mathrm{d}\xi \leq 2\Vert f'\Vert_{L^2(I)} \Vert f''\Vert_{L^2(I)} 
$$
for all $x \in \bar{I}$. This together with \eqref{Geq:2.2} yields \eqref{eq:derivative-estimate}. 
The remaining estimate immediately follows from the fact that $f(x) = \int_0^xf'(y)\;\mathrm{d}y$. 
 \end{proof}

We will also use the following interpolation inequality. For the proof we refer the reader to \cite[Theorem~6.4]{FFLM12}.

\begin{proposition}\label{Gtheorem:2.2}
Let $\Omega \subset \mathbf{R}^N$ be a bounded open set satisfying the cone condition. 
Let $k$, $\ell$, and $m$ be integers such that $0 \le k \le \ell \le m$. 
Let $1 \le p \le q < \infty$ if $(m-\ell)p \ge N$ and let $1 \le p \le q \le \infty$ if $(m-\ell)p > N$. 
Then, there exists $C>0$ such that 
$$
\Vert D^\ell f \Vert_{L^q(\Omega)} \le C(\Vert D^m f \Vert^\alpha_{L^p(\Omega)} \Vert D^k f\Vert^{1-\alpha}_{L^p(\Omega)} + \Vert D^k f \Vert_{L^p(\Omega)}) 
$$
for all $f \in W^{m,p}(\Omega)$, where 
$$
\alpha := \dfrac{1}{m-k}\Bigl( \dfrac{N}{p} - \dfrac{N}{q} + \ell - k \Bigr). 
$$
\end{proposition}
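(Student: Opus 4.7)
The plan is to prove this by the classical route for Gagliardo--Nirenberg interpolation inequalities, in three stages: a scale-invariant version on $\mathbf{R}^N$, transfer to $\Omega$ via an extension operator, and reduction of $k>0$ to the case $k=0$.

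First, I would establish the homogeneous inequality
\[
\|D^\ell g\|_{L^q(\mathbf{R}^N)} \leq C \|D^m g\|_{L^p(\mathbf{R}^N)}^\beta \|g\|_{L^p(\mathbf{R}^N)}^{1-\beta}, \qquad \beta = \tfrac{1}{m}\big(\tfrac{N}{p} - \tfrac{N}{q} + \ell\big),
\]
for $g \in C^\infty_c(\mathbf{R}^N)$. The value of $\beta$ is forced by tracking how both sides transform under the rescaling $g_\lambda(x) := g(\lambda x)$. To prove the inequality itself, I would combine the Sobolev embedding $W^{m,p}(\mathbf{R}^N) \hookrightarrow L^q(\mathbf{R}^N)$ (in the stated range of exponents) with the elementary one-step multiplicative bound $\|Dh\|_{L^p}^2 \leq C\|h\|_{L^p}\|D^2 h\|_{L^p}$, which follows from integration by parts and H\"older's inequality, iterating the latter to reach derivatives of arbitrary order and closing the argument by Young's inequality.

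Second, I would transfer the estimate from $\mathbf{R}^N$ to $\Omega$. The cone condition is exactly what guarantees the existence of a bounded linear extension operator $E \colon W^{m,p}(\Omega) \to W^{m,p}(\mathbf{R}^N)$ satisfying $(Ef)|_\Omega = f$ (Calder\'on--Stein). Applying the whole-space inequality to $Ef$ and restricting to $\Omega$ yields the desired estimate in the case $k=0$. Crucially, $E$ controls the full $W^{m,p}$ norm of $Ef$ rather than only $\|D^m(Ef)\|_{L^p}$, and this is precisely why an additive correction $\|f\|_{L^p}$ (or, after the next reduction, $\|D^k f\|_{L^p}$) must appear on the right-hand side; a purely multiplicative form is impossible on a bounded domain, as constant functions show.

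Third, I would reduce $k > 0$ to the case $k = 0$. For each multi-index $\gamma$ with $|\gamma| = k$, the function $D^\gamma f$ lies in $W^{m-k,p}(\Omega)$, and the $k=0$ inequality applied with $m$ and $\ell$ replaced by $m-k$ and $\ell-k$ yields a bound on $\|D^{\ell-k}(D^\gamma f)\|_{L^q}$ in terms of $\|D^{m-k}(D^\gamma f)\|_{L^p}$ and $\|D^\gamma f\|_{L^p}$. Summing over $|\gamma|=k$ recovers the full statement with $\alpha = \tfrac{1}{m-k}(\tfrac{N}{p} - \tfrac{N}{q} + \ell - k)$, since every derivative of order $\ell$ of $f$ is of the form $D^{\ell-k}(D^\gamma f)$ for some $\gamma$ with $|\gamma|=k$. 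The step I expect to be most delicate is Step~1 at the endpoint exponents $p=1$ or $q=\infty$, where Fourier-analytic techniques fail and one must instead rely on real-variable substitutes such as Hardy--Littlewood maximal function bounds or direct pointwise Riesz-potential estimates.
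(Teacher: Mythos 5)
The paper does not actually prove this proposition; it is quoted from \cite[Theorem~6.4]{FFLM12}, so there is no in-paper argument to compare against. Your three-stage plan is the classical route, and Step~3 --- reducing $k>0$ to $k=0$ by applying the base case (with orders $m-k$, $\ell-k$) to each $D^\sigma f$ with $|\sigma|=k$ --- is correct, as is your observation that the extension operator controls only the full $W^{m,p}$ norm and thereby forces the additive correction. However, Step~2 contains a genuine gap: the cone condition does \emph{not} by itself guarantee a bounded extension operator $W^{m,p}(\Omega)\to W^{m,p}(\mathbf{R}^N)$. Calder\'on's extension theorem requires the \emph{uniform} cone condition (and even then only for $1<p<\infty$), Stein's requires minimally smooth (Lipschitz-like) boundary, and Jones' requires an $(\varepsilon,\delta)$ domain; all of these are strictly stronger than the plain cone condition assumed here. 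A slit disk in $\mathbf{R}^2$ satisfies the cone condition but admits no Sobolev extension operator, since a function with mismatched traces on the two sides of the slit cannot be extended. The proof in \cite{FFLM12} works directly on $\Omega$, integrating along the cones, and does not pass through an extension. Your Step~2 would need to be replaced by such a direct argument, or the hypothesis on $\Omega$ strengthened; this is immaterial for the paper's applications, where $\Omega=I=(0,1)$, but it is a gap in the proof of the general statement.

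A more minor slip is in Step~1: the ingredients you list there (inhomogeneous Sobolev embedding, the iterated one-step bound $\|Dh\|_{L^p}^2\le C\|h\|_{L^p}\|D^2h\|_{L^p}$, and Young's inequality) yield only an additive estimate on $\mathbf{R}^N$ of the type $\|D^\ell g\|_{L^q}\le C\sum_j \|D^m g\|_{L^p}^{j/m}\|g\|_{L^p}^{1-j/m}$. The passage to the scale-invariant multiplicative form $\|D^\ell g\|_{L^q}\le C\|D^m g\|_{L^p}^\beta\|g\|_{L^p}^{1-\beta}$ is accomplished by applying that additive estimate to the dilates $g(\lambda\,\cdot)$ and optimizing over $\lambda$; you invoke the dilation only to \emph{read off} $\beta$, not to close the proof. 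Young's inequality runs in the opposite direction (multiplicative $\Rightarrow$ additive) and is what one uses in Step~2, after the extension, to reach the additive form actually stated in the proposition.
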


\subsection{Construction of scheme} 

In the following we construct a family of approximate solutions via minimizing movements.
For $u\in K$ we define $\mathcal{E}_\lambda(u):=\mathcal{E}_\lambda[\gamma_u]$, so that 
\[\mathcal{E}_\lambda(u)=\int_I\frac{(u'')^2}{(1+(u')^2)^{\frac{5}{2}}}\;\mathrm{d}x + \lambda \int_I \sqrt{1+(u')^2}\;\mathrm{d}x
\]
(cf.\ \eqref{eq:B-graph}). 
We fix $u_0 \in K$ arbitrarily, and define 
\begin{align*}
M_0:=\max_{x \in \bar{I}} |u'_0(x)|, 
\end{align*}
and let $\rho=\rho(u_0, \lambda)>0$ be the constant defined by
\begin{align}\label{eq:def-rho}
\rho:=\max\left\{c_H^{-1} (1+4M_0^2)^{\frac{5}{4}}, \sqrt{2} \right\} \mathcal{E}_\lambda(u_0)^{\frac{1}{2}}. 
\end{align}
We define $T>0$ by a constant small enough to satisfy
\begin{align}\label{Geq:4.1}
\sqrt{2}(1+4M_0^2)^{\frac{1}{16}} \rho T^{\frac{1}{8}}\leq \frac{1}{2}M_0.
\end{align}
Under the above setting, we define the discrete approximate solutions. 
For $T>0$ defined by \eqref{Geq:4.1} and each $n\in \mathbf{N}$, set $
\tau_{n} := {T}/{n}
$.
We define inductively $\{ u_{i,n} \}^n_{i=0}$ as follows. 
Let $u_{0,n}:=u_0$ and for each $i=1, \ldots,n$, we define $u_{i,n}$ by a solution of the minimization problem
\begin{align} \label{Gelmin} \tag{M$_{i,n}$}
\min_{v \in \tilde{K}}  G_{i,n}(v), \quad  G_{i,n}(v) := \mathcal{E}_\lambda(v) + P_{i,n}(v),
\end{align}
where
\begin{align}\notag
P_{i,n}(v) := \frac{1}{2\tau_{n}}\int_I \frac{|v-u_{i-1,n}|^2}{\sqrt{1+(u_{i-1,n}')^2}} \;\mathrm{d}x
\quad \text{and}\quad 
\tilde{K}:= \Big\{ v \in K \mid  \max_{x \in \bar{I}} |v'(x)|\le 2M_0 \Big\}.
\end{align}
By a standard direct method combined with the fact that 
\[
\frac{1}{(1+4M_0^2)^{\frac{5}{2}}} \Vert u_{i,n}''\Vert_{L^2(I)}^2 
\le \int_I \frac{(u_{i,n}'')^2}{(1+|u_{i,n}'|^2)^{\frac{5}{2}}} \;\mathrm{d}x 
= \mathcal{B}(u_{i,n}), 
\]
we can show that problem \eqref{Gelmin} possesses a solution for any $n\in \mathbf{N}$ and each $i=1,\ldots,n$.
Here we define $w_{i,n}:I\to \mathbf{R}$ by 
$$
w_{i,n}(x) := \frac{u_{i,n}(x)-u_{i-1,n}(x)}{\tau_{n}}. 
$$
\begin{definition}\label{def:interpolant}
Let $n\in \mathbf{N}$ and $i\in\{1,\ldots,n\}$.
\begin{itemize}
\item[(i)] Let ${u_n}:\bar{I}\times[0,T]\to\mathbf{R}$ be the piecewise linear interpolation of ${u_{i,n}}$, i.e. 
\[
u_{n}(x,t):= \sum_{i=1}^n\chi_{[(i-1)\tau_{n},i\tau_{n}]}(t)\Big[ u_{i-1,n}(x) + \bigl( t-(i-1)\tau_{n} \bigr)w_{i,n}(x) \Big].
\]
\item[(ii)] We define $\bar{u}_{n}(x,t) :\bar{I}\times [0,T] \to \mathbf{R}$ and $\underline{u}_{n}(x,t) :\bar{I}\times [0,T] \to \mathbf{R}$ by piecewise constant interpolations of ${u_{i,n}}$ as follows: 
\begin{align*}
    \bar{u}_n(x,t):&= \sum_{i=1}^n\chi_{[(i-1)\tau_{n},i\tau_{n}]}(t) u_{i,n}(x), \quad
    \underline{u}_n(x,t):= \sum_{i=1}^n\chi_{[(i-1)\tau_{n},i\tau_{n}]}(t) u_{i-1,n}(x). 
\end{align*}
\end{itemize}
\end{definition}

Note that $u_n$ is piecewise smooth with respect to $t\in(0,T)$. Indeed, 
\begin{align}\label{eq:partial_t-u_n}
\partial_t u_n(x,t)=\sum_{i=1}^n\chi_{[(i-1)\tau_{n},i\tau_{n}]}(t)w_{i,n}(x) \quad \text{for }\ (x,t)\in I \times(0,T). 
\end{align}

We derive the following uniform estimates for $\{ u_{i,n} \}$ and $\partial_t u_n$. 
\begin{lemma} \label{Gtheorem:4.4}
Let $n \in \mathbf{N}$ and let $\rho>0$ be the constant given by \eqref{eq:def-rho}.
Then 
\begin{equation} 
\label{Geq:4.6}
 \sup_{0 \le i \le n} \Vert u_{i,n}\Vert_{H^2(I)}\leq \rho, \quad \int_0^T\!\!\!\int_I\frac{(\partial_t u_n)^2}{\sqrt{1+(\underline{u}_n')^2}}\;\mathrm{d}x\mathrm{d}t  \leq \rho^2. 
\end{equation}
\end{lemma}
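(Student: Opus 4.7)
The plan is to exploit the minimizing property of $u_{i,n}$ in a standard telescoping argument, extracting an energy bound and a bound on the discrete time derivative from the same single estimate.

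First, since $u_{i,n}$ solves \eqref{Gelmin} and $u_{i-1,n}\in\tilde{K}$ is an admissible competitor (note $u_{i-1,n}\in\tilde K$ by construction), comparing $G_{i,n}(u_{i,n})\le G_{i,n}(u_{i-1,n})$ and using $P_{i,n}(u_{i-1,n})=0$ yields
\begin{equation*}
\mathcal{E}_\lambda(u_{i,n}) + P_{i,n}(u_{i,n}) \le \mathcal{E}_\lambda(u_{i-1,n}).
\end{equation*}
Iterating this inequality over $i=1,\ldots,k$ gives the discrete energy dissipation
\begin{equation*}
\mathcal{E}_\lambda(u_{k,n}) + \sum_{i=1}^{k} P_{i,n}(u_{i,n}) \le \mathcal{E}_\lambda(u_0)
\qquad \text{for every } k\in\{0,1,\ldots,n\}.
\end{equation*}

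For the first estimate in \eqref{Geq:4.6}, I would combine the bound $\mathcal{E}_\lambda(u_{i,n})\le \mathcal{E}_\lambda(u_0)$ with the fact that $u_{i,n}\in\tilde{K}$, i.e.\ $\|u_{i,n}'\|_{L^\infty}\le 2M_0$. This gives
\begin{equation*}
\|u_{i,n}''\|_{L^2(I)}^2 \le (1+4M_0^2)^{5/2}\,\mathcal{B}(u_{i,n}) \le (1+4M_0^2)^{5/2}\,\mathcal{E}_\lambda(u_0),
\end{equation*}
and then \eqref{Geq:2.1} converts this into an $H^2$-bound: $\|u_{i,n}\|_{H^2(I)}\le c_H^{-1}(1+4M_0^2)^{5/4}\mathcal{E}_\lambda(u_0)^{1/2}\le\rho$ by the definition of $\rho$ in \eqref{eq:def-rho}. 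The case $i=0$ is handled identically, using $\|u_0'\|_{L^\infty}=M_0$.

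For the second estimate, I would use \eqref{eq:partial_t-u_n} and the definition of $\underline{u}_n$ to rewrite the integral as a Riemann sum: on $[(i-1)\tau_n, i\tau_n]$ we have $\partial_t u_n = w_{i,n}$ and $\underline{u}_n=u_{i-1,n}$, so
\begin{equation*}
\int_0^T\!\!\int_I \frac{(\partial_t u_n)^2}{\sqrt{1+(\underline{u}_n')^2}}\,\mathrm{d}x\mathrm{d}t
= \sum_{i=1}^{n} \tau_n \int_I \frac{w_{i,n}^2}{\sqrt{1+(u_{i-1,n}')^2}}\,\mathrm{d}x
= 2\sum_{i=1}^{n} P_{i,n}(u_{i,n}),
\end{equation*}
where the last equality follows because $w_{i,n}=(u_{i,n}-u_{i-1,n})/\tau_n$. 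The discrete dissipation then bounds this by $2\mathcal{E}_\lambda(u_0)\le\rho^2$, again by the definition of $\rho$.

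There is no real obstacle here: the only mild subtlety is verifying that $u_{i-1,n}$ is an admissible competitor for \eqref{Gelmin} (so that the comparison step is valid), which follows from $u_{i-1,n}\in\tilde{K}$ by the preceding step of the induction, and that the constant $\rho$ in \eqref{eq:def-rho} has been chosen precisely to absorb both of the two structural constants appearing above.
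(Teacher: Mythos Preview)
Your proposal is correct and follows essentially the same approach as the paper: compare $G_{i,n}(u_{i,n})\le G_{i,n}(u_{i-1,n})$ to obtain the discrete energy dissipation, combine the resulting energy bound with the constraint $\|u_{i,n}'\|_{L^\infty}\le 2M_0$ and \eqref{Geq:2.1} for the $H^2$-estimate, and rewrite the time integral as $2\sum_i P_{i,n}(u_{i,n})$ for the second estimate. The only cosmetic difference is that you state the telescoping sum explicitly, whereas the paper leaves it implicit.
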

\begin{proof}
Fix $n\in\mathbf{N}$ and $i\in\{1,\ldots,n\}$.
Since $u_{i,n}$ is a minimizer of $G_{i,n}$ in $\tilde{K}$, we have 
\begin{align}\label{Geq:4.7}
\mathcal{E}_\lambda(u_{i,n}) + P_{i,n}(u_{i,n}) =  G_{i,n}(u_{i,n})\leq  G_{i,n}(u_{i-1,n}) = \mathcal{E}_\lambda(u_{i-1,n}), 
\end{align}
where we used the fact that $u_{i-1,n} \in \tilde{K}$. 
This clearly implies that $\mathcal{E}_\lambda(u_{i,n}) \le \mathcal{E}_\lambda(u_0)$ for $i=1,\ldots,n$. 
Then, by the fact that $|u_{i,n}'|\leq 2M_0$ we obtain 
\begin{align*}
\frac{1}{(1+4M_0^2)^{\frac{5}{2}}} \Vert u_{i,n}''\Vert_{L^2(I)}^2 
\le \int_I \frac{(u_{i,n}'')^2}{(1+|u_{i,n}'|^2)^{\frac{5}{2}}} \;\mathrm{d}x 
= \mathcal{B}(u_{i,n}) 
\le \mathcal{E}_\lambda(u_{i,n}) 
\le \mathcal{E}_\lambda(u_{0}).
\end{align*}
This together with \eqref{Geq:2.1} implies that 
\begin{align}\label{Geq:4.8}
\sup_{0\leq i \leq n}\Vert u_{i,n}\Vert_{H^2(I)} \leq c_H^{-1}\sup_{0\leq i \leq n}\Vert u_{i,n}''\Vert_{L^{2}(I)} 
\le  c_H^{-1}(1+4M_0^2)^{\frac{5}{4}}\mathcal{E}_\lambda(u_{0})^{\frac{1}{2}}\leq \rho,
\end{align}
where the last inequality follows from the definition of $\rho$.

We turn to the estimate on $\partial_t u_n/\sqrt{1+(\underline{u}_n')^2}$. 
From \eqref{Geq:4.7} it follows that
$
P_{i,n}(u_{i,n})\leq\mathcal{E}_\lambda(u_{i-1,n}) - \mathcal{E}_\lambda(u_{i,n})
$. 
Then using \eqref{eq:partial_t-u_n} we compute 
\begin{equation}
\label{Geq:4.9}
\begin{split}
\int_0^T\!\!\!\int_I \frac{(\partial_t u_n)^2}{\sqrt{1+(\underline{u}_n')^2}} \;\mathrm{d}x\mathrm{d}t 
&= \sum_{i=1}^{n}\int_{(i-1)\tau_n}^{i\tau_n}\int_I \frac{w_{i,n}^2}{\sqrt{1+(u_{i-1,n}')^2}} \;\mathrm{d}x\mathrm{d}t 
= 2\sum_{i=1}^{n} P_{i,n}(u_{i,n}) \\
&\le 2\sum_{i=1}^{n} \Big( \mathcal{E}_\lambda(u_{i-1,n}) - \mathcal{E}_\lambda(u_{i,n}) \Big) 
 \le 2\mathcal{E}_\lambda(u_{0}) \leq \rho. 
\end{split}
\end{equation} 
The proof is complete.
 \end{proof}

The following lemma plays an important role in justifying the choice of the admissible set $\tilde{K}$ instead of $K$ in the minimization problem \eqref{Gelmin}. 

\begin{lemma} \label{Gtheorem:4.5}
For any $n\in\mathbf{N}$ and each $i=1,\ldots,n$, 
\begin{align*} 
\Vert u'_{i,n}\Vert_{L^{\infty}(I)} \leq \frac{3}{2}M_0. 
\end{align*}
\end{lemma}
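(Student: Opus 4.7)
The plan is to apply the interpolation inequality of Lemma~\ref{Gtheorem:2.1} to the difference $f:=u_{i,n}-u_0$, which lies in $H(I)$ since both $u_{i,n}$ and $u_0$ vanish at the endpoints. This reduces everything to controlling $\|u_{i,n}-u_0\|_{L^2(I)}$ and $\|u_{i,n}-u_0\|_{H(I)}$, and then invoking the smallness assumption \eqref{Geq:4.1} on $T$ to absorb the resulting bound by $\tfrac{1}{2}M_0$. A final triangle inequality $\|u_{i,n}'\|_{L^\infty(I)} \le \|u_0'\|_{L^\infty(I)} + \|(u_{i,n}-u_0)'\|_{L^\infty(I)}$ then yields the claim.

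The $H(I)$-norm factor is immediate: the triangle inequality together with the uniform $H^2$-bound $\|u_{i,n}\|_{H^2(I)}\le \rho$ supplied by Lemma~\ref{Gtheorem:4.4}, and the analogous bound for $u_0$ that is built into the very definition \eqref{eq:def-rho} of $\rho$, yields $\|u_{i,n}-u_0\|_{H(I)}\le C\rho$.

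The $L^2$-norm requires slightly more care. I would exploit the telescoping identity
\[
u_{i,n}-u_0 \;=\; \sum_{j=1}^{i}\tau_{n} w_{j,n} \;=\; \int_{0}^{i\tau_{n}} \partial_t u_{n}(\cdot,t)\,\mathrm{d}t,
\]
and apply the Cauchy--Schwarz inequality in time to obtain $\|u_{i,n}-u_0\|_{L^2(I)}^2 \le T\,\|\partial_t u_n\|_{L^2(0,T;L^2(I))}^2$. The second estimate in Lemma~\ref{Gtheorem:4.4} then controls the right-hand side, but only \emph{after} removing the weight $1/\sqrt{1+(\underline{u}_n')^2}$. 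This is exactly where the restricted admissible set $\tilde{K}$ enters: because $u_{j,n}\in \tilde{K}$, we have $|\underline{u}_n'|\le 2M_0$ almost everywhere, hence $\sqrt{1+(\underline{u}_n')^2}\le\sqrt{1+4M_0^2}$, which yields a bound of the form $\|u_{i,n}-u_0\|_{L^2(I)} \le T^{\frac{1}{2}}(1+4M_0^2)^{\frac{1}{4}}\rho$.

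Substituting both estimates into Lemma~\ref{Gtheorem:2.1} gives
\[
\|(u_{i,n}-u_0)'\|_{L^\infty(I)} \le C\,(1+4M_0^2)^{\frac{1}{16}}\,\rho\,T^{\frac{1}{8}},
\]
and the definition of $T$ in \eqref{Geq:4.1} is precisely calibrated so that the right-hand side is at most $\tfrac{1}{2}M_0$. The main obstacle is the bookkeeping of the constants in this last display — the exponents $\tfrac{1}{4}$ and $\tfrac{3}{4}$ from the Gagliardo--Nirenberg-type estimate combine with the weight $\sqrt{1+4M_0^2}$ coming from $\tilde{K}$ to produce precisely the scaling $(1+4M_0^2)^{\frac{1}{16}}\rho T^{\frac{1}{8}}$ appearing in \eqref{Geq:4.1}; apart from this accounting, every step is a direct consequence of the lemmas already established.
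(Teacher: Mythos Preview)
Your proposal is correct and follows essentially the same route as the paper: apply the interpolation estimate of Lemma~\ref{Gtheorem:2.1} to the difference with the initial datum, bound the $H(I)$-factor via Lemma~\ref{Gtheorem:4.4}, control the $L^2$-factor by writing the difference as a time integral of $\partial_t u_n$ and using Cauchy--Schwarz together with the weight removal $|\underline{u}_n'|\le 2M_0$ afforded by $\tilde{K}$, and then invoke \eqref{Geq:4.1}. The paper merely phrases this via the piecewise linear interpolant $u_n(\cdot,t)$ at general times $0\le t_1<t_2\le T$ before specializing to $t_1=0$, $t_2=i\tau_n$, which is exactly your telescoping identity in disguise.
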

\begin{proof}
Fix $0\leq t_1 < t_2 \leq T$ arbitrarily. 
By Lemma~\ref{Gtheorem:2.1} we have 
\begin{equation} 
\label{Geq:4.10}
\begin{split}
&\Vert u'_n(\cdot,t_2) - u'_n(\cdot,t_1) \Vert_{L^{\infty}(I)} \\
& \qquad \le \sqrt{2} \Vert u_n(\cdot,t_2)-u_n(\cdot,t_1) \Vert_{H(I)}^{\frac{3}{4}} \Vert u_n(\cdot,t_2)-u_n(\cdot,t_1) \Vert_{L^{2}(I)}^{\frac{1}{4}}.
\end{split}
\end{equation}
It follows from \eqref{Geq:2.1} and Lemma \ref{Gtheorem:4.4} that 
\begin{align} \label{Geq:4.11}
\sup_{t\in[0,T]} \Vert u_{n}(\cdot,t)\Vert_{H(I)} \le \sup_{t\in[0,T]} \Vert u_{n}(\cdot,t)\Vert_{H^2(I)}
 \le \sup_{0\leq i \le n} \Vert u_{i,n} \Vert_{H^2(I)} \le \rho.
\end{align}
By H\"older's inequality 
we see that  
\begin{align*}
\Vert u_n(\cdot,t_2)-u_n(\cdot,t_1) \Vert^2_{L^2(I)} 
& = \int_I \Big| \int_{t_1}^{t_2} \partial_t u_n\;\mathrm{d}t \Big|^2 \;\mathrm{d}x 
 \leq (t_2-t_1) \int_I \int_{t_1}^{t_2}|\partial_t u_n|^2\;\mathrm{d}t \mathrm{d}x \\
&\leq (t_2-t_1)\sqrt{1+4M_0^2} \int_I\int_{t_1}^{t_2}\frac{(\partial_t u_n)^2}{\sqrt{1+(\underline{u}_n')^2}}\;\mathrm{d}t \mathrm{d}x \\
& \le \sqrt{1+4M_0^2} (t_2 - t_1)\rho^2, 
\end{align*}
where in the last inequality we used Lemma~\ref{Gtheorem:4.4}.
Thus, by \eqref{Geq:4.10} and \eqref{Geq:4.11} we have 
\begin{align*}
\Vert u'_n(\cdot,t_2) - u'_n(\cdot,t_1) \Vert_{L^{\infty}(I)} 
\le \sqrt{2}(1+4M_0^2)^{\frac{1}{16}} \rho (t_2 - t_1)^{\frac{1}{8}}, 
\end{align*}
and this and \eqref{Geq:4.1} imply  
\begin{equation}
\label{Geq:4.13} 
\Vert u'_n(\cdot,t_2) - u'_n(\cdot,t_1) \Vert_{L^{\infty}(I)} \le \frac{1}{2} M_0. 
\end{equation}
In particular, for each $i=1, \ldots, n$, by taking $t_{2}=i\tau_{n}$ and $t_{1}=0$ in \eqref{Geq:4.13}, we have 
\begin{align*}
\Vert u'_{i,n}\Vert_{L^{\infty}(I)} \leq\Vert u'_n(\cdot,i\tau_{n})-u'_0 \Vert_{L^{\infty}(I)} + \Vert u'_0 \Vert_{L^{\infty}(I)}\leq \frac{1}{2} M_0 + M_0 = \frac{3}{2} M_0 . 
\end{align*}  
The proof is complete.
 \end{proof}

\subsection{Regularity} \label{Gsubsection:4.2}

In this subsection we discuss the regularity of approximate solutions with respect to the space variable. 
Fix a nonnegative function $\varphi\in C^{\infty}_{{\rm c}}(I)$ arbitrarily. 
Then it follows from Lemma~\ref{Gtheorem:4.5} that $u_{i,n} + \varepsilon \varphi \in \tilde{K}$ for $i=1, \ldots, n$ and $\varepsilon>0$ small enough. 
Hence, by the minimality of $u_{i,n}$ we have 
$$
\frac{d}{d\varepsilon} G_{i,n}(u_{i,n}+\varepsilon \varphi ) \Big|_{\varepsilon=0} \ge 0.
$$
The inequality is reduced into 
\begin{align} 
\begin{split}
\label{Geq:4.14}
L_{i,n}(\varphi):= \int_I\bigg[ 2\frac{u_{i,n}''\varphi''}{(1+(u_{i,n}')^2)^{\frac{5}{2}}}  
               &- 5 \frac{(u_{i,n}'')^2u_{i,n}'\varphi'}{(1+(u_{i,n}')^2)^{\frac{7}{2}}} \\
               &+ \lambda \frac{u_{i,n}'\varphi'}{(1+(u_{i,n}')^2)^{\frac{1}{2}}} + \frac{w_{i,n}\varphi}{(1+(u_{i-1,n}')^2)^{\frac{1}{2}}} \bigg] \;\mathrm{d}x \ge 0 
\end{split}
\end{align}
for all $\varphi \in C^{\infty}_{{\rm c}}(I)$ with $\varphi \ge 0$. 
We observe from \eqref{Geq:4.14} that $L_{i,n}$ is a nonnegative distribution on $C^{\infty}_{\rm{c}}(I)$. 
Therefore, by the Riesz representation theorem (see e.g.\ \cite[2.14 Theorem]{Rud}), we find a nonnegative Radon measure $\mu_{i,n}$ such that
\begin{equation} 
\label{Geq:4.15}
L_{i,n}(\varphi) = \int_I \varphi \;\mathrm{d}\mu_{i,n}
\end{equation}
for all $\varphi\in C^{\infty}_{{\rm c}}(I)$ and $i=1, \ldots, n$. 
Since $u_{i,n}, \psi \in C(\bar{I})$, the noncoincidence set defined by 
$$
\mathcal{N}_{i,n}:=\{ x \in I \mid u_{i,n}(x) > \psi(x) \}
$$
is open in $I$. 
In addition, the measure $\mu_{i,n}$ has a support on $I \setminus \mathcal{N}_{i,n}$, i.e. 
\begin{align} \label{Geq:4.16}
\mu_{i,n}(\mathcal{N}_{i,n})=0. 
\end{align}
Indeed, for any $\varphi\in C^{\infty}_{{\rm c}}(I)$ with ${\rm supp}\, \varphi \subset \mathcal{N}_{i,n}$, we have $u_{i,n}\pm\varepsilon \varphi\geq \psi$ in $I$ for $\varepsilon > 0$ small enough. 
Thus we have $L_{i,n}(\varphi)=0$, and then \eqref{Geq:4.15} yields 
\begin{equation}
\label{Geq:4.17}
\int_I \varphi \; \mathrm{d}\mu_{i,n} = 0 \quad \text{for all } \ \varphi\in C^{\infty}_{{\rm c}}(I) \ \text{ with } \  \supp \varphi \subset \mathcal{N}_{i,n}. 
\end{equation}
Thus \eqref{Geq:4.16} follows from \eqref{Geq:4.17} (see e.g.\ \cite[Chapter II, Theorem 6.9]{KSbook}). 

The following lemma immediately follows from \eqref{eq:psi_condition} and the fact that $u_{i,n}\in K$.

\begin{lemma} \label{Gtheorem:4.7}
There exists $\delta>0$ such that for any $n\in\mathbf{N}$ and $i = 1,\ldots, n$
$$
(0,\delta)\cup(1-\delta,1) \subset \mathcal{N}_{i,n}. 
$$
\end{lemma}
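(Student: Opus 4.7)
The plan is to combine the uniform Lipschitz bound from Lemma~\ref{Gtheorem:4.5} with the pinned boundary condition $u_{i,n}(0)=u_{i,n}(1)=0$ (built into $K\subset H^1_0(I)$) and the strict negativity of $\psi$ at the endpoints furnished by \eqref{eq:psi_condition}. Since $u_{i,n}(x)=\int_0^x u'_{i,n}(y)\,\mathrm{d}y$ and $\|u'_{i,n}\|_{L^\infty(I)}\leq \tfrac{3}{2}M_0$ uniformly in $n$ and $i$ by Lemma~\ref{Gtheorem:4.5}, I would first record the $(n,i)$-uniform one-sided bounds
$$
u_{i,n}(x)\geq -\tfrac{3}{2}M_0\, x \quad \text{and} \quad u_{i,n}(x) \geq -\tfrac{3}{2}M_0\,(1-x) \quad \text{on } [0,1].
$$

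Next, continuity of $\psi$ together with $\psi(0)<0$ and $\psi(1)<0$ lets me pick $\delta_0\in(0,\tfrac{1}{2})$ with
$$
\psi(x) \leq \tfrac{1}{2}\psi(0) \text{ on } [0,\delta_0], \qquad \psi(x) \leq \tfrac{1}{2}\psi(1) \text{ on } [1-\delta_0,1].
$$
I would then choose $\delta\in(0,\delta_0]$ small enough that $\tfrac{3}{2}M_0\,\delta < \tfrac{1}{2}\min\{|\psi(0)|,|\psi(1)|\}$; in the degenerate case $M_0=0$, Lemma~\ref{Gtheorem:4.5} forces $u_{i,n}\equiv 0$ and any $\delta\leq\delta_0$ suffices. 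For $x\in(0,\delta)$ the bounds above yield
$$
u_{i,n}(x) \geq -\tfrac{3}{2}M_0\,x > \tfrac{1}{2}\psi(0) \geq \psi(x),
$$
so $(0,\delta)\subset\mathcal{N}_{i,n}$, and the symmetric argument based on the bound $u_{i,n}(x)\geq -\tfrac{3}{2}M_0\,(1-x)$ gives $(1-\delta,1)\subset\mathcal{N}_{i,n}$.

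There is no substantive obstacle here: the whole content reduces to the observation that Lemma~\ref{Gtheorem:4.5} supplies an $(n,i)$-independent modulus of continuity for $u_{i,n}$ at the pinned endpoints, which combined with the strict negativity of $\psi$ at $0$ and $1$ forces a uniform separation of $u_{i,n}$ from $\psi$ on a fixed neighbourhood of $\{0,1\}$. This is precisely why Lemma~\ref{Gtheorem:4.5} is the decisive ingredient and also why the author characterizes the statement as immediate.
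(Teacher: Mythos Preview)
Your proof is correct and follows essentially the same approach as the paper's: combine a uniform derivative bound with the pinned boundary condition and the continuity of $\psi$ together with $\psi(0),\psi(1)<0$. The only cosmetic difference is that you invoke Lemma~\ref{Gtheorem:4.5} directly for the bound $\|u'_{i,n}\|_{L^\infty}\le\tfrac{3}{2}M_0$, whereas the paper obtains an equivalent bound via Sobolev embedding and Lemma~\ref{Gtheorem:4.4}.
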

\begin{proof}
Since $\psi\in C(\bar{I})$ and $\psi(0)<0$, there exists $\delta_0 \in I$ such that 
\begin{equation} 
\label{Geq:4.18}
\psi(x) < \frac{3}{4}\,\psi(0) \quad \text{for}\quad x \in [0,\delta_0). 
\end{equation}
Moreover, by Sobolev's embedding theorem and Lemma \ref{Gtheorem:4.4} we have 
\begin{equation}
\label{Geq:4.19}
 \Vert u_{i,n}\Vert_{W^{1, \infty}(I)} \leq C  \Vert u_{i,n}\Vert_{H^2(I)} \leq C\rho  
\end{equation}
for $n \in \mathbf{N}$ and $i=1, \ldots, n$. 
This together with the fact that $u_{i,n}(0)=0$ yields 
$$
\sup_{x\in(0,\delta_0)}\frac{| u_{i,n}(x)|}{x} \leq \Vert u_{i,n}\Vert_{W^{1, \infty}(I)}\leq C\rho.
$$
Hence, taking $\delta \in(0,\delta_0)$ small enough to satisfy $\delta<-\psi(0) /4C\rho$, we obtain
\begin{equation}
\label{Geq:4.20}
u_{i,n}(x) \geq \frac{1}{4}\,\psi(0) \quad \text{for} \quad x \in [0,\delta].
\end{equation}
Thus, by \eqref{Geq:4.18} and \eqref{Geq:4.20} we see that the above $\delta$ satisfies $(0,\delta) \subset \mathcal{N}_{i,n}$. 
In addition \eqref{Geq:4.18} and \eqref{Geq:4.19} imply that $\delta$ is independent of $i$ and $n$. 
Following the same argument, by replacing $\delta$ with smaller one if needed, we can show that $(1-\delta,1)\subset \mathcal{N}_{i,n}$. 
 \end{proof}

Thanks to Lemma~\ref{Gtheorem:4.7} combined with \eqref{Geq:4.16} it follows that 
\[
\mu_{i,n}(I) = \mu_{i,n}(0,\delta)  + \mu_{i,n}([\delta,1-\delta]) +  \mu_{i,n}(1-\delta,1) = \mu_{i,n}([\delta,1-\delta]).
\]
This allows us to deduce a uniform estimate of $\mu_{i,n}(I)$ as follows. 

\begin{lemma} \label{Gtheorem:4.8} 
There exists a constant $C_1>0$ such that for any $n \in \mathbf{N}$
$$
\tau_{n}\sum_{i=1}^{n} \mu_{i,n}(I)^2 \leq C_1. 
$$
\end{lemma}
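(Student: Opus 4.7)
The plan is to test the distributional representation \eqref{Geq:4.15} against a fixed cut-off function and then use Cauchy--Schwarz together with the energy estimate \eqref{Geq:4.9} to sum the squares in $i$ with weight $\tau_n$.

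First, by Lemma~\ref{Gtheorem:4.7}, choose $\delta>0$ independent of $i,n$ such that $\supp \mu_{i,n} \subset [\delta, 1-\delta]$ (this is where we use \eqref{Geq:4.16}). I would fix once and for all a cut-off $\varphi_0\in C^\infty_{\rm c}(I)$ with $\varphi_0\geq 0$ and $\varphi_0\equiv 1$ on $[\delta, 1-\delta]$. Then
\[
\mu_{i,n}(I) = \mu_{i,n}([\delta, 1-\delta]) \leq \int_I \varphi_0\,\mathrm{d}\mu_{i,n} = L_{i,n}(\varphi_0),
\]
so everything reduces to estimating $L_{i,n}(\varphi_0)$ using the explicit expression in \eqref{Geq:4.14}.

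The three ``elliptic'' terms in $L_{i,n}(\varphi_0)$ are uniformly bounded in $i,n$: using Lemma~\ref{Gtheorem:4.4} (giving $\Vert u_{i,n}''\Vert_{L^2}\leq \rho$), Lemma~\ref{Gtheorem:4.5} (giving $\Vert u_{i,n}'\Vert_{L^\infty}\leq \tfrac{3}{2}M_0$), and H\"older's inequality, the first three integrands are controlled by a constant $C_0=C_0(\rho,M_0,\lambda,\varphi_0)$. The delicate term is the fourth, $\int_I w_{i,n}\varphi_0/\sqrt{1+(u_{i-1,n}')^2}\,\mathrm{d}x$, because $w_{i,n}$ is not uniformly bounded in $i$. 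Apply Cauchy--Schwarz with the weight $1/\sqrt{1+(u_{i-1,n}')^2}$:
\[
\int_I \frac{w_{i,n}\varphi_0}{\sqrt{1+(u_{i-1,n}')^2}}\,\mathrm{d}x \leq \Vert \varphi_0\Vert_{L^2(I)}\Bigl(\int_I \frac{w_{i,n}^2}{\sqrt{1+(u_{i-1,n}')^2}}\,\mathrm{d}x\Bigr)^{\!1/2}.
\]
Combining these bounds yields $\mu_{i,n}(I) \leq C_0 + C_0'(\int_I w_{i,n}^2/\sqrt{1+(u_{i-1,n}')^2}\,\mathrm{d}x)^{1/2}$ with $C_0'$ independent of $i,n$.

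Squaring (via $(a+b)^2\leq 2a^2+2b^2$) and multiplying by $\tau_n$ before summing in $i$ gives
\[
\tau_n \sum_{i=1}^n \mu_{i,n}(I)^2 \leq 2TC_0^2 + 2(C_0')^2 \sum_{i=1}^n \tau_n \int_I \frac{w_{i,n}^2}{\sqrt{1+(u_{i-1,n}')^2}}\,\mathrm{d}x.
\]
The remaining sum is precisely what appears in \eqref{Geq:4.9}, and is bounded by $\rho^2$. Hence $\tau_n\sum_{i=1}^n \mu_{i,n}(I)^2 \leq 2TC_0^2 + 2(C_0')^2\rho^2 =: C_1$, which proves the claim. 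The main obstacle is step (4): the $w_{i,n}$ term individually blows up, and only by exploiting the weighted $L^2$-in-time discrete estimate from the minimizing-movements energy inequality can one recover a uniform control after squaring and summing.
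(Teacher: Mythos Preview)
Your proposal is correct and follows essentially the same approach as the paper: fix a cut-off $\varphi_0$ (the paper calls it $\zeta$) that is identically $1$ on $[\delta,1-\delta]$, use \eqref{Geq:4.15}--\eqref{Geq:4.16} and Lemma~\ref{Gtheorem:4.7} to bound $\mu_{i,n}(I)\le L_{i,n}(\varphi_0)$, estimate the three stationary terms uniformly and the $w_{i,n}$-term by Cauchy--Schwarz, then square, weight by $\tau_n$, sum, and invoke the discrete energy estimate \eqref{Geq:4.9}. The only cosmetic differences are notation and the precise H\"older pairings chosen for the lower-order terms.
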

\begin{proof}
Let $\delta>0$ be the constant obtained in Lemma~\ref{Gtheorem:4.7}.
Fix $\zeta\in C^{\infty}_{{\rm c}}(I)$ with $\zeta\equiv1$ in $[\delta,1-\delta]$ and $0\leq \zeta \leq 1$ in $\bar{I}$. 
From \eqref{Geq:4.15} it follows that 
\[
\mu_{i,n}(I) =\mu_{i,n}([\delta,1-\delta]) 
 \le \int_I\zeta \;\mathrm{d}\mu_{i,n} 
  = L_{i,n}(\zeta),
\]
and from H\"older's inequality we deduce that 
\begin{align*}
L_{i,n}(\zeta)   
& \le 2 \Vert u_{i,n}''\Vert_{L^2(I)} \Vert \zeta''\Vert_{L^2(I)}  
+5 \Vert u_{i,n}''\Vert_{L^2(I)}^2 \Vert u_{i,n}'\Vert_{L^{\infty}(I)} \Vert \zeta'\Vert _{L^{\infty}(I)} \\
& \quad   +\lambda \textcolor{black}{\Vert u_{i,n}''\Vert_{L^{2}(I)} \Vert \zeta\Vert_{L^{2}(I)}}+  \Vert \zeta \Vert_{L^{2}(I)} \Big\|\tfrac{w_{i,n}}{\sqrt{1+|u_{i-1,n}'|^2}}\Big\|_{L^2(I)}.
\end{align*}
These estimates lead to 
$$
\mu_{i,n}(I) \leq C \bigg( \sup_{1\leq i\leq n}\|u_{i,n}\|_{H^2(I)} + \sup_{1\leq i\leq n}\|u_{i,n}\|_{H^2(I)}^2 + \Big\|\tfrac{w_{i,n}}{\sqrt{1+|u_{i-1,n}'|^2}}\Big\|_{L^2(I)} \bigg),
$$
where $C>0$ is a constant depending only on $\delta,\lambda,M_0$.
Noting that
\[ \tau_n \sum_{i=1}^n \Big\|\tfrac{w_{i,n}}{\sqrt{1+|u_{i-1,n}'|^2}}\Big\|_{L^2(I)}^2 = \int_0^T\!\!\!\int_I\frac{(\partial_t u_n)^2}{\sqrt{1+(\underline{u}_n')^2}}\;\mathrm{d}x\mathrm{d}t,
\]
we deduce from Lemma~\ref{Gtheorem:4.4} that 
$$
\tau_n \sum_{i=1}^n \mu_{i,n}(I)^2 
 \le C\big( T\rho^2 + T\rho^4 + \rho^2 \big) =:C_1, 
$$
which completes the proof.
 \end{proof}

Next we improve the regularity of the approximate solution $\bar{u}_n$.

\begin{lemma} \label{Gtheorem:4.10}
Let $\bar{u}_n$ be the piecewise constant interpolation of $\{ u_{i,n}\}$. 
Then, for any $p\in[2,\infty]$ there exists $C>0$ such that for any $n \in \mathbf{N}$
\begin{align}\label{eq:250205-1}
\Vert \bar{u}'''_n \Vert_{L^{\frac{p}{p-1}}(0,T;L^p(I))} \le C(T+1).
\end{align}
In particular, $\bar{u}_n \in L^{\frac{p}{p-1}}(0,T;W^{3,p}(I))$ for any $p\in[2,\infty]$.
\end{lemma}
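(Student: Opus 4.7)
The strategy is to convert the Euler--Lagrange structure encoded in \eqref{Geq:4.14}--\eqref{Geq:4.15} into an almost-everywhere pointwise identity for $u'''_{i,n}$, then sum in time. Setting $g_{i,n}:=w_{i,n}/\sqrt{1+(u'_{i-1,n})^2}$, equation \eqref{Geq:4.15} is equivalent to
\begin{equation*}
\Bigl(\tfrac{2u''_{i,n}}{(1+(u'_{i,n})^2)^{5/2}}\Bigr)'' + \Bigl(\tfrac{5(u''_{i,n})^2 u'_{i,n}}{(1+(u'_{i,n})^2)^{7/2}} - \tfrac{\lambda u'_{i,n}}{\sqrt{1+(u'_{i,n})^2}}\Bigr)' + g_{i,n} = \mu_{i,n}\qquad\text{in }\mathcal{D}'(I).
\end{equation*}
I would first derive the natural boundary conditions $u''_{i,n}(0)=u''_{i,n}(1)=0$ by exploiting Lemma \ref{Gtheorem:4.7}: since $u_{i,n}>\psi$ on a fixed neighborhood of each endpoint, every $\zeta\in C^2(\bar{I})$ with $\zeta(0)=\zeta(1)=0$ and $\zeta\equiv 0$ on $[\delta,1-\delta]$ provides an admissible two-sided variation for small $|\varepsilon|$ (the gradient bound from Lemma \ref{Gtheorem:4.5} is preserved), so $L_{i,n}(\zeta)=0$. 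After a standard elliptic bootstrap yields smoothness of $u_{i,n}$ on $(0,\delta)\cup(1-\delta,1)$ (where $\mu_{i,n}$ vanishes), two integrations by parts isolate the boundary contributions $-2u''_{i,n}(0)\zeta'(0)/|\gamma'_{i,n}(0)|^5+2u''_{i,n}(1)\zeta'(1)/|\gamma'_{i,n}(1)|^5$, and varying $(\zeta'(0),\zeta'(1))$ delivers the claim.

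Setting $\pi_{i,n}:=2u''_{i,n}/(1+(u'_{i,n})^2)^{5/2}$, which now vanishes at both endpoints, a single integration of the displayed identity produces the BV-level equality
\begin{equation*}
\pi'_{i,n}(x)=-q_{i,n}(x)+\mu_{i,n}([0,x])-G_{i,n}(x)+c_{i,n}\quad\text{for a.e.\ }x\in I,
\end{equation*}
with $q_{i,n}$ collecting bounded lower-order terms, $G_{i,n}(x):=\int_0^x g_{i,n}$, and $c_{i,n}\in\mathbf{R}$ fixed by the condition $\int_0^1\pi'_{i,n}\,\mathrm{d}x=\pi_{i,n}(1)-\pi_{i,n}(0)=0$. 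Combined with Lemmas \ref{Gtheorem:4.4}--\ref{Gtheorem:4.8} this yields $|c_{i,n}|\le C(1+\mu_{i,n}(I)+\|g_{i,n}\|_{L^2(I)})$. Since $u'''_{i,n}=\tfrac12(1+(u'_{i,n})^2)^{5/2}\pi'_{i,n}+5(u''_{i,n})^2u'_{i,n}/(1+(u'_{i,n})^2)$ and $\|u'_{i,n}\|_{L^\infty(I)}\le\tfrac32M_0$, taking $L^p$-norms gives
\begin{equation*}
\|u'''_{i,n}\|_{L^p(I)}\le C\bigl(1+\|u''_{i,n}\|_{L^{2p}(I)}^2+\mu_{i,n}(I)+\|g_{i,n}\|_{L^2(I)}\bigr).
\end{equation*}

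I would then close this estimate by the one-dimensional Gagliardo--Nirenberg interpolation $\|u''_{i,n}\|_{L^{2p}(I)}\le C\|u'''_{i,n}\|_{L^p(I)}^{a_p}\|u''_{i,n}\|_{L^2(I)}^{1-a_p}+C\|u''_{i,n}\|_{L^2(I)}$ with $a_p:=(p-1)/(3p-2)$. Combined with $\|u''_{i,n}\|_{L^2(I)}\le\rho$ from Lemma \ref{Gtheorem:4.4} this gives $\|u''_{i,n}\|_{L^{2p}(I)}^2\le C\|u'''_{i,n}\|_{L^p(I)}^{2a_p}+C$, and since the exponent $2a_p=2(p-1)/(3p-2)\in[\tfrac12,\tfrac23]\subset[0,1)$ for every $p\in[2,\infty]$, Young's inequality absorbs the $\|u'''_{i,n}\|_{L^p}$-term and leaves
\begin{equation*}
\|u'''_{i,n}\|_{L^p(I)}\le C\bigl(1+\mu_{i,n}(I)+\|g_{i,n}\|_{L^2(I)}\bigr).
\end{equation*}

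Raising to the power $p/(p-1)\in[1,2]$, applying $a^{p/(p-1)}\le 1+a^2$, and summing over $i$ with weights $\tau_n$ produces
\begin{equation*}
\tau_n\sum_{i=1}^n\|u'''_{i,n}\|_{L^p(I)}^{p/(p-1)}\le C\Bigl(T+\tau_n\sum_{i=1}^n\mu_{i,n}(I)^2+\tau_n\sum_{i=1}^n\|g_{i,n}\|_{L^2(I)}^2\Bigr)\le C(T+1),
\end{equation*}
the last step combining Lemma \ref{Gtheorem:4.8} with the estimate $\tau_n\sum_i\|g_{i,n}\|_{L^2(I)}^2\le\rho^2$ (which follows from Lemma \ref{Gtheorem:4.4} via $1/(1+(u')^2)\le 1/\sqrt{1+(u')^2}$); this yields \eqref{eq:250205-1} after extracting the $(p-1)/p$-th root. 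The main obstacle is that the Euler--Lagrange equation carries only a Radon measure on its right-hand side, so no direct $L^p$-estimate on $u'''_{i,n}$ is available; one must therefore invert the outer $\partial_x^2$, use the natural boundary condition to fix the integration constant $c_{i,n}$, and close a nonlinear interpolation--absorption loop whose critical exponent $2a_p$ fortunately stays strictly below $1$ throughout the admissible range $p\in[2,\infty]$.
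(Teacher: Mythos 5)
Your argument is correct but follows a genuinely different route from the paper's. The paper sidesteps boundary terms entirely: it uses the two handcrafted test functions $\varphi_1,\varphi_2\in H^2_0(I)$ (whose value \emph{and} first derivative vanish at the endpoints), first extracting the pointwise $L^\infty$-bound \eqref{Geq:4.27} on $u_{i,n}''$ and then the weak $W^{1,p}$-estimate \eqref{eq:W3p-est_i,n} in the form $\|u_{i,n}'''\|_{L^p}\le C\,C_{i,n}^{(2p-2)/p}$. You instead (a) derive the Navier boundary conditions $u_{i,n}''(0)=u_{i,n}''(1)=0$ by testing with two-sided variations that vanish at, but have nonzero slope at, the endpoints --- exploiting the uniform noncoincidence zone of Lemma~\ref{Gtheorem:4.7} and local elliptic smoothness where $\mu_{i,n}$ vanishes; (b) integrate the distributional Euler--Lagrange equation once, determining the constant from those boundary conditions; and (c) close the resulting a priori inequality by a Gagliardo--Nirenberg absorption with exponent $2a_p=2(p-1)/(3p-2)<1$. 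Your scheme removes the need for the paper's intermediate $L^\infty$ estimate on $u_{i,n}''$, produces the sharper per-step bound $\|u_{i,n}'''\|_{L^p}\le C(1+\mu_{i,n}(I)+\|g_{i,n}\|_{L^2})$ with exponent $1$ (versus the paper's $(2p-2)/p$), and both then deliver the same time-summed bound after raising to the $\tfrac{p}{p-1}$th power and invoking Lemmas~\ref{Gtheorem:4.4} and~\ref{Gtheorem:4.8}. Two points deserve a bit more care in a full write-up: your phrase that $q_{i,n}$ collects ``bounded'' lower-order terms is imprecise at the stage you introduce it, since the $(u_{i,n}'')^2$ contribution is a priori only in $L^1$ --- the subsequent $\|u_{i,n}''\|_{L^{2p}}^2$ treatment handles it correctly, but the wording could mislead; and the Young absorption is only legitimate once $\|u_{i,n}'''\|_{L^p}<\infty$ is known qualitatively, which your BV-level integration does give (via $\pi_{i,n}'\in L^1$, hence $u_{i,n}'''\in L^1$, hence $u_{i,n}''\in L^\infty$ by 1D Sobolev embedding, hence $u_{i,n}'''\in L^p$), but this bootstrap chain is worth spelling out.
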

\begin{proof}
For the measure $\mu_{i,n}$ obtained in \eqref{Geq:4.15}, we define a function $m_{i,n}$ as 
$$
m_{i,n}(x):=\mu_{i,n}(0,x) 
\quad  \text{for} \quad x \in I.
$$
Then $m_{i,n}$ is of bounded variation on $I$. 
Using integration by parts for Lebesgue-Stieltjes integrals  induced by $m_{i,n}$ (\cite[Chapter III, Theorem 14.1]{saks}), we have 
\begin{equation*}
\int_I \varphi\;\mathrm{d}\mu_{i,n}(x) =- \int_I m_{i,n}(x) \varphi'(x) \;\mathrm{d}x
\end{equation*}
for $\varphi\in C^{\infty}_{\rm c}(I)$. 
This and \eqref{Geq:4.15} imply
\begin{align}\label{Geq:4.23}
\begin{split}
2 \int_I \dfrac{u_{i,n}''}{(1+(u_{i,n}')^2)^{\frac{5}{2}}} \varphi'' \;\mathrm{d}x
=&\, 
5\int_I \frac{(u_{i,n}'')^2u_{i,n}'\varphi'}{(1+(u_{i,n}')^2)^{\frac{7}{2}}} \;\mathrm{d}x - \lambda \int_I \frac{u_{i,n}'\varphi'}{(1+(u_{i,n}')^2)^{\frac{1}{2}}}\;\mathrm{d}x \\
&\quad -\int_I \frac{w_{i,n}\varphi}{(1+(u_{i-1,n}')^2)^{\frac{1}{2}}}  \;\mathrm{d}x - \int_I m_{i,n} \varphi' \;\mathrm{d}x
\end{split}
\end{align}
for $\varphi\in C^{\infty}_{\rm c}(I)$. 
By a density argument \eqref{Geq:4.23} also holds for $\varphi\in H^2_0(I)$. 

Fix $\eta\in C^{\infty}_{\rm c}(I)$ arbitrarily and define $\varphi_1:I\to\mathbf{R}$ by
\begin{align*}
\varphi_1(x) &:= \int_{0}^x\!\!\!\int_{0}^y \eta (s)\;\mathrm{d}s\mathrm{d}y+ \alpha x^2 + \beta x^3, \quad x \in I,
\end{align*}
where 
\begin{align*}
\alpha&:= \int_I \eta (y)\;\mathrm{d}y -3 \int_I\int_{0}^y \eta (s)\;\mathrm{d}s \mathrm{d}y,  \quad 
\beta:= -\alpha - \int_I\int_{0}^y \eta (s)\;\mathrm{d}s\mathrm{d}y.
\end{align*}
Then $\varphi_1 \in H^2_0(I)$ and there exists a universal constant $C$ such that 
$\Vert\varphi_1\Vert_{C^1(\bar{I})} \leq C\Vert\eta\Vert_{L^1(I)}$, $|\alpha|\leq C\Vert\eta\Vert_{L^1(I)}$, and $|\beta| \leq C \Vert\eta\Vert_{L^1(I)}$.
Taking $\varphi_1$ as $\varphi$ in \eqref{Geq:4.23}, 
we have 
\begin{align*} 
\bigg| \int_I \frac{u''_{i,n}}{(1+(u_{i,n}')^2)^{\frac{5}{2}}}  \eta \;\mathrm{d}x \bigg| 
 & \le  C\bigg( 1 +  \bigg\Vert \frac{w_{i,n}}{(1+(u_{i-1,n}')^2)^{\frac{1}{2}}}\bigg\Vert_{L^1(I)} +  \mu_{i,n}(I)  \bigg) \Vert\eta\Vert_{L^1(I)} \\
 &\quad:=C_{i,n}\Vert\eta\Vert_{L^1(I)}, 
\end{align*}
which implies that $u''_{i,n}(1+(u_{i,n}')^2)^{-\frac{5}{2}} \in L^\infty(I)$ and 
\begin{equation} 
\label{Geq:4.26}
\bigg\Vert \frac{u''_{i,n}}{(1+(u_{i,n}')^2)^{\frac{5}{2}}}\bigg\Vert_{L^{\infty}(I)} \leq C_{i,n}.
\end{equation}
Moreover, combining this with the fact that $\Vert u_{i,n}'\Vert_{L^{\infty}(I)}\leq 2M_0$, we obtain
\begin{align}\label{Geq:4.27}
\left\Vert u''_{i,n}\right\Vert_{L^{\infty}(I)} \le (1+4M_0^2)^{\frac{5}{2}} C_{i,n}. 
\end{align}
We turn to the uniform $W^{3,p}$-estimate on $u_{i,n}$. 
Fix $\eta\in C^{\infty}_{\rm c}(I)$ arbitrarily and define $\varphi_2:I\to\mathbf{R}$ by
\[
\varphi_2(x) := \int_0^x \eta (y)\;\mathrm{d}y +\left( \int_{0}^{1} \eta (y)\;\mathrm{d}y \right) \left(-3x^2  +2x^3\right),\quad x\in I.
\]
Then $\varphi_2 \in H^2_0(I)$ and by using a universal constant $C$ we have $\Vert\varphi_2\Vert_{L^\infty(I)} \leq C\Vert\eta\Vert_{L^1(I)}$, and $\Vert\varphi_2'\Vert_{L^p(I)} \leq C\Vert\eta\Vert_{L^p(I)}$ for any $p\in[1,\infty]$.
Taking $\varphi_2$ as $\varphi$ in \eqref{Geq:4.23} and using the $L^\infty$-bound on $u_{i,n}''$ in \eqref{Geq:4.26}, we obtain  
\begin{align}
2 \int_I \dfrac{u_{i,n}''}{(1+(u_{i,n}')^2)^{\frac{5}{2}}} \eta' \;\mathrm{d}x
=&\, -\bigg(\int_I\eta\;\mathrm{d}y \bigg) \bigg(\int_I\dfrac{u_{i,n}''}{(1+(u_{i,n}')^2)^{\frac{5}{2}}}(-6x+6x^2)\;\mathrm{d}x \bigg) \label{eq:250626-1}\\ 
&\quad+5\int_I \frac{(u_{i,n}'')^2u_{i,n}'\varphi_2'}{(1+(u_{i,n}')^2)^{\frac{7}{2}}} \;\mathrm{d}x - \lambda \int_I \frac{u_{i,n}'\varphi_2'}{(1+(u_{i,n}')^2)^{\frac{1}{2}}}\;\mathrm{d}x \notag\\
&\quad -\int_I \frac{w_{i,n}\varphi_2}{(1+(u_{i-1,n}')^2)^{\frac{1}{2}}}  \;\mathrm{d}x - \int_I m_{i,n} \varphi_2' \;\mathrm{d}x.\notag
\end{align}
Now fix an arbitrary $p\in[2,\infty]$. 
Recalling from \eqref{Geq:4.7} that $\mathcal{B}(u_{i,n}) \leq \mathcal{E}_\lambda(u_{i,n}) \leq \mathcal{E}_\lambda(u_0)$, we have 
\begin{align}\label{eq:H2_estimate_step}
\textcolor{black}{
\|u_{i,n}''\|_{L^2(I)}^2 \leq (1+4M_0^2)^{\frac{5}{2}}\int_I \frac{(u''_{i,n})^2}{(1+(u_{i,n}')^2)^{\frac{5}{2}}}\;\mathrm{d}x \leq (1+4M_0^2)^{\frac{5}{2}}\mathcal{E}_\lambda(u_0).
}
\end{align}
This and \eqref{Geq:4.27} yield that
\begin{align*}
\bigg|\int_I \frac{(u_{i,n}'')^2u_{i,n}'\varphi_2'}{(1+(u_{i,n}')^2)^{\frac{7}{2}}} \;\mathrm{d}x \bigg|
&\leq 2M_0\|u_{i,n}''\|_{L^2(I)}^{\frac{2}{p}}\|u_{i,n}''\|_{L^\infty(I)}^{\frac{2p-2}{p}} \|\varphi_2'\|_{L^{\frac{p}{p-1}}(I)} \\ 
&\leq CC_{i,n}^{{\frac{2p-2}{p}}}\|\varphi_2'\|_{L^{\frac{p}{p-1}}(I)}. 
\end{align*}
Therefore, it follows from \eqref{eq:250626-1} that  
\begin{align*}
2&\bigg| \int_I \dfrac{u_{i,n}''}{(1+(u_{i,n}')^2)^{\frac{5}{2}}} \eta' \;\mathrm{d}x\bigg| \\
&\leq \, 
C\|\eta\|_{L^1(I)}  \bigg\| \dfrac{u_{i,n}''}{(1+(u_{i,n}')^2)^{\frac{5}{4}}} \bigg\|_{L^2(I)} 
+5CC_{i,n}^{{\frac{2p-2}{p}}}\|\varphi_2'\|_{L^{\frac{p}{p-1}}(I)} \\
&\ \ + \lambda \|\varphi_2'\|_{L^1(I)} 
+ \bigg\Vert \frac{w_{i,n}}{(1+(u_{i-1,n}')^2)^{\frac{1}{2}}}\bigg\Vert_{L^1(I)}\|\varphi_2\|_{L^\infty(I)}  +  \mu_{i,n}(I) \|\varphi_2'\|_{L^1(I)} \\
&\leq \, C \bigg( 1 + C_{i,n}^{{\frac{2p-2}{p}}}  + \bigg\Vert \frac{w_{i,n}}{(1+(u_{i-1,n}')^2)^{\frac{1}{2}}}\bigg\Vert_{L^1(I)} + \mu_{i,n}(I) \bigg)\|\eta\|_{L^{\frac{p}{p-1}}(I)} \\
&\leq \, C \Big(  C_{i,n}^{{\frac{2p-2}{p}}}  + C_{i,n}\Big)\|\eta\|_{L^{\frac{p}{p-1}}(I)}.
\end{align*}
We may assume $C_{i,n}\geq1$ by replacing $C_{i,n}$ with $C_{i,n}+1$. Since $(2p-2)/p\geq1$, we have 
$$
\bigg| \int_I \frac{u''_{i,n}}{(1+(u_{i,n}')^2)^{\frac{5}{2}}}  \eta' \;\mathrm{d}x \bigg| \le C C_{i,n}^{{\frac{2p-2}{p}}}\|\eta\|_{L^{\frac{p}{p-1}}(I)}, 
$$
so that we find that $f_{i,n}:=u''_{i,n}(1+(u_{i,n}')^2)^{-\frac{5}{2}} \in W^{1,p}(I)$ and  
$\|f_{i,n}'\|_{L^p(I)} \leq CC_{i,n}^{\frac{2p-2}{p}}$.
Hence, by fact that $\|(u_{i,n}'')^2\|_{L^p(I)} \leq \|u_{i,n}''\|_{L^2(I)}^{\frac{2}{p}}\|u_{i,n}''\|_{L^\infty(I)}^{\frac{2p-2}{p}}$ it follows that 
\begin{align} 
\begin{split}
\label{eq:W3p-est_i,n}
\Vert u_{i,n}'''\Vert_{L^p(I)}  &\leq (1+4M_0^2)^{\frac{5}{2}}\|f_{i,n}'\|_{L^p(I)} +  \frac{5}{2}(1+4M_0^2)^{\frac{3}{2}}\|f_{i,n}u_{i,n}''\|_{L^p(I)} 
\leq  CC_{i,n}^{{\frac{2p-2}{p}}}.
\end{split}
\end{align}
This leads to 
\begin{align*}
\int_0^T \Vert\bar{u}_{n}'''\Vert^{\frac{p}{p-1}}_{L^p(I)} \;\mathrm{d}t 
&= \sum_{i=1}^n \int_{(i-1)\tau_n}^{i\tau_n}\Vert u_{i,n}'''\Vert_{L^p(I)}^{\frac{p}{p-1}} \;\mathrm{d}t 
\leq C \sum_{i=1}^n \int_{(i-1)\tau_n}^{i\tau_n} C_{i,n}^2 \;\mathrm{d}t
\\
 &\le C \tau_n \sum_{i=1}^n \Big(1+\mu_{i,n}(I)^2\Big) +C\sum_{i=1}^n\int_{(i-1)\tau_n}^{i\tau_n}\int_I \frac{w_{i,n}^2}{(1+(u'_{i-1,n})^2)^{\frac{1}{2}}} \;\mathrm{d}x\mathrm{d}t \\
&\leq C(T+1), 
\end{align*}
where in the last inequality we used Lemmas~\ref{Gtheorem:4.4} and \ref{Gtheorem:4.8}. 
In addition, \eqref{Geq:4.27} yields
\begin{align}\notag
\|u_{i,n}''\|_{L^p(I)} 
\leq \|u_{i,n}''\|_{L^\infty(I)}^{\frac{p-2}{p}}\|u_{i,n}''\|_{L^2(I)}^{\frac{2}{p}}
\leq CC_{i,n}^{\frac{p-2}{p}}.
\end{align}
By Lemma~\ref{Gtheorem:2.1} and the fact that $u_{i,n}\in \tilde{K}$, we have $\|u_{i,n}\|_{W^{1,\infty}(I)}\leq 4M_0$.
Hence,
\[
\int_0^T \Vert\bar{u}_{n}\Vert^{\frac{p}{p-1}}_{W^{2,p}(I)} \;\mathrm{d}t 
\leq C\sum_{i=1}^n \int_{(i-1)\tau_n}^{i\tau_n}C_{i,n}^2 \;\mathrm{d}t \leq C(T+1).
\]
Thus we have also shown that $\bar{u}_{n} \in L^{\frac{p}{p-1}}(0,T;W^{3,p}(I))$. 
 \end{proof}

\subsection{Convergence} 

\begin{lemma} \label{lem:conv_u_n} 
Let ${u}_{n}$ be the piecewise linear interpolation of $\{ u_{i,n}\}$ defined in Definition~\ref{def:interpolant}.
Then, there exists 
\begin{align}\label{eq:mild_regularity_u}
u \in L^{\infty}(0,T;H^2(I)) \cap H^1(0,T;L^{2}(I)) 
\end{align}
such that $u$ possesses a continuous representation on $[0,T]$ with values in $C^{1,1}(\bar{I})$, i.e.\ $u \in C([0,T]; C^{1,1}(\bar{I}))$, and up to a subsequence, 
\begin{align}
&{u}_{n} \rightharpoonup u \quad \text{weakly$^*$ in} \quad L^{\infty}(0,T;H^2(I)), \label{eq:250126-5}\\
&u_{n} \rightharpoonup u \quad \text{weakly in} \quad H^1(0,T;L^2(I)), \label{eq:250126-2}\\
&u_n  \to  u \quad  \text{in} \quad C([0,T];C^{1,1}(\bar{I})) \label{eq:s-limit_u}.
\end{align}
\end{lemma}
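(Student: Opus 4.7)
The plan is to translate the discrete-time estimates of Lemmas~\ref{Gtheorem:4.4} and~\ref{Gtheorem:4.5} into uniform bounds on the piecewise linear interpolants, extract weak and weak$^*$ limits by Banach--Alaoglu, and then upgrade to strong convergence by an Arzelà--Ascoli argument.

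First I would observe that, since $u_{n}(\cdot,t)$ is at each $t$ a convex combination of two successive $u_{i,n}$'s, the bound $\sup_{i}\|u_{i,n}\|_{H^{2}(I)}\le\rho$ from Lemma~\ref{Gtheorem:4.4} gives $\|u_{n}\|_{L^{\infty}(0,T;H^{2}(I))}\le\rho$. On each sub-interval $((i-1)\tau_{n},i\tau_{n})$ one has $\partial_{t}u_{n}=w_{i,n}$ pointwise; combining this with $\|\underline{u}_{n}'\|_{L^{\infty}}\le 2M_{0}$ (from Lemma~\ref{Gtheorem:4.5}) turns the weighted estimate in Lemma~\ref{Gtheorem:4.4} into
$$
\|\partial_{t}u_{n}\|_{L^{2}(0,T;L^{2}(I))}^{2}\le \rho^{2}\sqrt{1+4M_{0}^{2}}.
$$
Thus $\{u_{n}\}$ is uniformly bounded in $L^{\infty}(0,T;H^{2}(I))\cap H^{1}(0,T;L^{2}(I))$.

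Second, applying Banach--Alaoglu to the predual identification $L^{\infty}(0,T;H^{2}(I))=(L^{1}(0,T;(H^{2}(I))^{*}))^{*}$ and using reflexivity of $H^{1}(0,T;L^{2}(I))$, I would extract a (non-relabeled) subsequence and a limit $u$ in the intersection satisfying \eqref{eq:250126-5} and \eqref{eq:250126-2}. The second convergence, together with $H^{1}(0,T;L^{2}(I))\hookrightarrow C([0,T];L^{2}(I))$, produces the $C([0,T];L^{2}(I))$-representative.

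Third, for the strong convergence \eqref{eq:s-limit_u} I would regard $\{u_{n}\}$ as a family in $C([0,T];C^{1}(\bar{I}))$ and check the Arzelà--Ascoli hypotheses. Equi-boundedness pointwise in $t$ follows from $L^{\infty}_{t}H^{2}_{x}\hookrightarrow L^{\infty}_{t}C^{1,1/2}_{x}$ in one dimension. Equicontinuity in $t$ with values in $C^{1}(\bar{I})$ is exactly the Hölder bound
$$
\|u_{n}'(\cdot,t_{2})-u_{n}'(\cdot,t_{1})\|_{L^{\infty}(I)}\le\sqrt{2}(1+4M_{0}^{2})^{1/16}\rho\,|t_{2}-t_{1}|^{1/8}
$$
derived inside the proof of Lemma~\ref{Gtheorem:4.5}. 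Since $C^{1,1/2}(\bar{I})\hookrightarrow\hookrightarrow C^{1,\alpha}(\bar{I})$ for every $\alpha<1/2$, a diagonal argument produces a subsequence converging strongly in $C([0,T];C^{1,\alpha}(\bar{I}))$ for each such $\alpha$, and uniqueness of the weak$^{*}$ limit forces this strong limit to coincide with $u$.

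The main difficulty is the claimed target space $C^{1,1}(\bar{I})$: neither the uniform $H^{2}$-bound nor any one-dimensional Sobolev embedding yields $L^{\infty}$-control of $u_{n}''$, so $C^{1,1/2}$ is the best regularity available from the bounds above. To reach $C^{1,1}$ I would have to invoke the pointwise $L^{\infty}$-estimate \eqref{Geq:4.27} on $u_{i,n}''$ from the proof of Lemma~\ref{Gtheorem:4.10} together with the square-summability $\tau_{n}\sum_{i}\mu_{i,n}(I)^{2}\le C_{1}$ of Lemma~\ref{Gtheorem:4.8}; these combine to give an $L^{2}(0,T)$-in-time bound for $\|u_{n}''(\cdot,t)\|_{L^{\infty}(I)}$, which paired with the $|t_{2}-t_{1}|^{1/8}$ time-Hölder bound on $u_{n}'$ allows one to upgrade the limiting regularity to the $C^{1,1}$ target by a standard interpolation / lower-semicontinuity argument. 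This interpolation step is where I expect the technical work to concentrate.
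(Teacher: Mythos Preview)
Your argument for \eqref{eq:250126-5} and \eqref{eq:250126-2} is essentially the paper's: uniform $L^\infty_t H^2_x$ and $L^2_t L^2_x$ bounds on $u_n$ and $\partial_t u_n$, then Banach--Alaoglu/reflexivity. For the strong convergence \eqref{eq:s-limit_u} you take a different route: the paper simply invokes the Aubin--Lions--Simon compactness theorem (with $H^2(I)\hookrightarrow\hookrightarrow X\hookrightarrow L^2(I)$), whereas you run Arzel\`a--Ascoli by hand, using the explicit $|t_2-t_1|^{1/8}$ H\"older modulus from the proof of Lemma~\ref{Gtheorem:4.5} for equicontinuity. Both approaches are valid and yield the same conclusion; the paper's is shorter, yours makes the time-equicontinuity mechanism transparent.

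Your instinct about the target space is correct, and you should not try to repair it. The embedding $H^2(I)\hookrightarrow C^{1,1}(\bar I)$ simply fails (an $L^2$ second derivative need not be bounded), so neither Aubin--Lions nor your Arzel\`a--Ascoli argument can land in $C([0,T];C^{1,1}(\bar I))$; the paper's own invocation of Aubin--Lions does not reach $C^{1,1}$ either, and the statement is almost certainly a typo for $C^{1}(\bar I)$ or $C^{1,\alpha}(\bar I)$ with $\alpha<1/2$ (consistently, Section~\ref{subsect:rem_weak_sol} only claims $X\subset C([0,T];C^1(\bar I))$, and the downstream uses of \eqref{eq:s-limit_u} need nothing beyond uniform $C^0$ convergence). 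Your proposed upgrade via \eqref{Geq:4.27} and Lemma~\ref{Gtheorem:4.8} does not work: those estimates give at best an $L^2$-in-time bound on $\|u_n''(\cdot,t)\|_{L^\infty(I)}$, which yields neither $u\in C([0,T];W^{2,\infty}(I))$ nor convergence of $u_n$ in that norm. Drop the final paragraph; your proof is complete once you stop at $C([0,T];C^{1,\alpha}(\bar I))$.
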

\begin{proof}
First, by \eqref{Geq:4.6} we can deduce the uniform estimate as
\begin{align}\label{eq:250218-1}
\sup_{t\in[0,T]}\|{u}_n(\cdot,t)\|_{H^2(I)} \leq \sup_{0\leq i \leq n}\|u_{i,n}\|_{H^2(I)} \leq \rho.
\end{align}
Thus, we can find $u \in L^{\infty}(0,T;H^2(I))$ such that \eqref{eq:250126-5} holds up to a subsequence.

Next we show \eqref{eq:250126-2}. 
Combining \eqref{Geq:4.6} with the fact that $\sup_{n\in\mathbf{N}}\|\underline{u}_n'\| \leq 2M_0$, we see that $\partial_t u_n$ is uniformly bounded in $L^2(0,T;L^{2}(I))$. 
Therefore, there exists $w_\infty \in L^2(0,T;L^{2}(I))$ such that, up to a subsequence, $\partial_t u_n$ converges to $w_\infty$ weakly in $L^2(0,T;L^{2}(I))$. 
Then, passing to the weak limit in 
\[
 \int_0^T\!\!\int_I  u_n\, \partial_t \phi \;\mathrm{d}x\mathrm{d}t = -\int_0^T\!\!\int_I \partial_t u_n  \phi \;\mathrm{d}x\mathrm{d}t, 
\]
where $\phi \in C^\infty_{\rm c}(I\times(0,T))$ is a test function, we find that 
\[
 \int_0^T\!\!\int_I  u\, \partial_t \phi \;\mathrm{d}x\mathrm{d}t = -\int_0^T\!\!\int_I w_\infty  \phi \;\mathrm{d}x\mathrm{d}t. 
\]
This implies that the limit $u$ obtained in \eqref{eq:250126-5} satisfies $u\in H^1(0,T;L^2(I))$ and $\partial_t u =w_\infty$, so that the limit $u$ satisfies \eqref{eq:mild_regularity_u} and \eqref{eq:250126-2} has been shown.

It remains to show \eqref{eq:s-limit_u}.
Thanks to Aubin--Lions--Simon compactness theorem (see e.g.\ \cite[Theorem II.5.16]{BF_2013}), the embedding of 
\[
\Big\{v\in L^\infty(0,T;H^2(I)), \ \frac{\partial v}{\partial t} \in L^2(0,T;L^2(I))\Big\}
\]
in $C([0,T]; C^{1,1}(\bar{I}))$ is compact. 
Thus, by \eqref{eq:mild_regularity_u} we have $u\in C([0,T];C^{1,1}(\bar{I}))$. 
Then \eqref{eq:s-limit_u} immediately follows.
 \end{proof}

\begin{lemma} \label{lem:conv_tilde-u_n} 
Let $u$  be the limit obtained in Lemma~\ref{lem:conv_u_n}. 
Then, for any $p\in [2,\infty)$ $u \in L^{\frac{p}{p-1}}(0,T;W^{3,p}(I))$.
In addition, there are subsequences of $\bar{u}_{n}$ and $\underline{u}_{n}$ (without relabeling) such that  
\begin{align}
&\bar{u}_{n} \rightharpoonup u \quad \,\,\,\,\,\, \text{weakly$^*$ in} \quad L^{\infty}(0,T;H^2(I)), \label{Geq:4.33}\\
&\bar{u}_{n} \rightharpoonup u\quad \quad \text{weakly in} \quad L^{\frac{p}{p-1}}(0,T;W^{3,p}(I)), \label{eq:250126-1} \\
&\bar{u}_n  \to  u \quad \,\,\,\,\,\,\, \text{in} \quad L^2(0,T;W^{2,\infty}(I)) \label{eq:s-limit_tilde_u} \\
&\underline{u}_n  \to  u \quad \,\,\,\,\,\,\, \text{in} \quad L^2(0,T;C^{1}(\bar{I})). \label{eq:limit_under_u} 
\end{align}
\end{lemma}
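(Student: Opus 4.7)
My plan is to build everything on two observations: first, that $u_n$, $\bar u_n$, $\underline u_n$ share uniform bounds in $L^\infty(0,T;H^2(I))$ by Lemma~\ref{Gtheorem:4.4}; second, that $\bar u_n$ and $\underline u_n$ are close to $u_n$ in $L^2(0,T;L^2(I))$. For the latter, on each interval $[(i-1)\tau_n,i\tau_n]$ we have $u_n-\bar u_n=(t-i\tau_n)w_{i,n}$ and $u_n-\underline u_n=(t-(i-1)\tau_n)w_{i,n}$, so a direct computation together with the uniform bound on $\|\partial_t u_n\|_{L^2(L^2)}$ (which follows from Lemma~\ref{Gtheorem:4.4} combined with $\|\underline u_n'\|_{L^\infty}\le 2M_0$) yields $\|u_n-\bar u_n\|_{L^2(L^2)}+\|u_n-\underline u_n\|_{L^2(L^2)}=O(\tau_n)$. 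Since Lemma~\ref{lem:conv_u_n} gives $u_n\to u$ in $C([0,T];C^{1,1}(\bar I))$, I conclude $\bar u_n,\underline u_n\to u$ strongly in $L^2(0,T;L^2(I))$.

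With strong $L^2(L^2)$-convergence and the uniform bound in $L^\infty(0,T;H^2(I))$, Banach--Alaoglu yields a weak-$*$ convergent subsequence of $\bar u_n$, and uniqueness of weak limits identifies its limit with $u$, giving \eqref{Geq:4.33}. Similarly, Lemma~\ref{Gtheorem:4.10} gives a uniform bound of $\bar u_n$ in the reflexive space $L^{p/(p-1)}(0,T;W^{3,p}(I))$, so extracting a further subsequence and identifying the weak limit with $u$ gives \eqref{eq:250126-1} and simultaneously $u\in L^{p/(p-1)}(0,T;W^{3,p}(I))$.

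For the strong convergence \eqref{eq:s-limit_tilde_u}, I would use Proposition~\ref{Gtheorem:2.2} with $N=1$, $m=3$, $\ell=2$, $k=0$, $q=\infty$, and some $p\in(1,\infty)$: this gives
\begin{equation*}
\|(\bar u_n-u)''\|_{L^\infty(I)}\le C\bigl(\|\bar u_n-u\|_{W^{3,p}(I)}^{\alpha}\|\bar u_n-u\|_{L^p(I)}^{1-\alpha}+\|\bar u_n-u\|_{L^p(I)}\bigr),
\end{equation*}
with $\alpha=(2p+1)/(3p)$. Taking $p=2$ so that $\alpha=5/6$, squaring, integrating in time, and applying H\"older's inequality with exponents $4/3$ and $4$, I bound
\begin{equation*}
\int_0^T\|\bar u_n-u\|_{W^{2,\infty}(I)}^2\mathrm{d}t\le C\|\bar u_n-u\|_{L^2(0,T;H^3(I))}^{3/2}\|\bar u_n-u\|_{L^2(0,T;L^2(I))}^{1/2}+\text{l.o.t.},
\end{equation*}
and the right-hand side tends to zero because the first factor is uniformly bounded by Lemma~\ref{Gtheorem:4.10} (taking $p=2$, so $p/(p-1)=2$) while the second factor vanishes by Step~1. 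The argument for \eqref{eq:limit_under_u} is the same but easier: applying Proposition~\ref{Gtheorem:2.2} with $m=2$, $\ell=1$, $k=0$, $p=2$, $q=\infty$ (so $\alpha=3/4$) and combining the $L^\infty(0,T;H^2(I))$-bound on $\underline u_n-u$ with $\underline u_n\to u$ in $L^2(L^2)$ via H\"older with exponents $4/3$ and $4$ gives $\underline u_n\to u$ in $L^2(0,T;W^{1,\infty}(I))$, which is equivalent to $L^2(0,T;C^1(\bar I))$-convergence since all functions involved lie in $C^1(\bar I)$. The main subtlety is just choosing the interpolation exponents so that the available bounds on $\bar u_n-u$ (uniform in $L^2(0,T;W^{3,2}(I))$, vanishing in $L^2(0,T;L^2(I))$) combine to yield strong convergence in the target space; everything else is bookkeeping.
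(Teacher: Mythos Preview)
Your approach is essentially the same as the paper's: extract weak(-$*$) limits from the uniform bounds of Lemmas~\ref{Gtheorem:4.4} and~\ref{Gtheorem:4.10}, identify them with $u$ via the $O(\tau_n)$ closeness of $\bar u_n,\underline u_n$ to $u_n$ in $L^2(0,T;L^2(I))$, and then interpolate between the strong $L^2(L^2)$-convergence and the uniform higher-order bounds to obtain \eqref{eq:s-limit_tilde_u} and \eqref{eq:limit_under_u}. One harmless arithmetic slip: with $\alpha=5/6$, after squaring the interpolation inequality the correct H\"older exponents are $6/5$ and $6$, yielding $\|\bar u_n-u\|_{L^2(0,T;H^3(I))}^{5/3}\|\bar u_n-u\|_{L^2(0,T;L^2(I))}^{1/3}$ rather than the powers $3/2$ and $1/2$ you wrote; the conclusion is unaffected since the first factor is still uniformly bounded and the second still vanishes.
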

\begin{proof}
As in \eqref{eq:250218-1}, we can check that $\bar{u}_n$ is uniformly bounded in $L^\infty(0,T;H^2(I))$.
Thus there exists $u_* \in L^\infty(0,T;H^2(I))$ such that, up to subsequence, $\bar{u}_n$ converges to $u_*$ weakly$^*$ in $L^{\infty}(0,T;H^2(I))$.
In the following we show that $u_*$ coincides with the limit $u$ obtained by Lemma~\ref{lem:conv_u_n}. 
By definition we have $\bar{u}_n(x,t) - u_n(x,t) = (t-i\tau_n)w_{i,n}$ for each $i\in\{1,\ldots,n\}$ and $t\in((i-1)\tau_n,i\tau_n]$, so that 
\begin{align}
\begin{split}\label{eq:250130-1}
\|\bar{u}_n - u_n\|_{L^2(0,T;L^2(I))}^2
&= \sum_{i=1}^n\int_{(i-1)\tau_n}^{i\tau_n}\int_I|\bar{u}_n - u_n|^2\;\mathrm{d}x\mathrm{d}t \\
&= \sum_{i=1}^n\int_{(i-1)\tau_n}^{i\tau_n}\int_I(t-i\tau_n)^2|w_{i,n}|^2\;\mathrm{d}x\mathrm{d}t \\
&\leq \tau_n^2 (1+4M_0^2)^{\frac{1}{2}}\sum_{i=1}^n\int_{(i-1)\tau_n}^{i\tau_n}\int_I\frac{|w_{i,n}|^2}{(1+(u_{i-1,n}')^2)^{\frac{1}{2}}}\;\mathrm{d}x\mathrm{d}t \\
&\leq \rho^2(1+4M_0^2)^{\frac{1}{2}}\tau_n^2, 
\end{split}
\end{align}
where in the last inequality we used Lemma~\ref{Gtheorem:4.4}.
Thus it follows that
\begin{align}\label{eq:250126-4}
\|\bar{u}_n - u_n\|_{L^2(0,T;L^2(I))} \to0, 
\end{align}
This shows that the (strong) limits of $\bar{u}_n$ and $u_n$ coincide, so that $u=u_*$. 

Similarly the claim \eqref{eq:250126-1} follows. 
Indeed, we observe from Lemma~\ref{Gtheorem:4.10} that $\bar{u}_n$ is uniformly bounded in $L^\frac{p}{p-1}(0,T;W^{3,p}(I))$ for an arbitrary $p\in[2,\infty)$, and hence there exists $\bar{u}_* \in L^\frac{p}{p-1}(0,T;W^{3,p}(I))$ such that, up to a subsequence, $\bar{u}_n$ converges to $\bar{u}_*$ weakly in $L^\frac{p}{p-1}(0,T;W^{3,p}(I))$. 
This and \eqref{eq:250126-4} imply that $\bar{u}_* = u$, and hence $u$ lies in $L^\frac{p}{p-1}(0,T;W^{3,p}(I))$ for any  $p\in[2,\infty)$ and \eqref{eq:250126-1} has been shown.

Next we consider \eqref{eq:s-limit_tilde_u}.
Lemma~\ref{Gtheorem:2.1} combined with \eqref{Geq:2.1} yields 
\[
\|\bar{u}_n(\cdot,t)-u(\cdot,t)\|_{W^{1,\infty}(I)}\leq 2\sqrt{2}\|\bar{u}_n(\cdot,t)-u(\cdot,t)\|_{L^2(I)}^{\frac{1}{4}}\|\bar{u}_n(\cdot,t)-u(\cdot,t)\|_{H^2(I)}^{\frac{3}{4}}
\]
for a.e.\ $t\in(0,T)$. 
Since $u\in L^\infty(0,T;H^2(I))$ by Lemma~\ref{lem:conv_u_n}, and $\|\bar{u}_n(\cdot,t)\|_{H^2(I)}\leq\rho$ as in \eqref{eq:250218-1}, we have 
\begin{align}\label{eq:250126-6}
\int_0^T\|\bar{u}_n(\cdot,t)-u(\cdot,t)\|_{W^{1,\infty}(I)}^2\;\mathrm{d}t 
&\leq C\|\bar{u}_n(\cdot,t)-u(\cdot,t)\|_{L^2(0,T;L^2(I))}. 
\end{align}
In addition, by Proposition~\ref{Gtheorem:2.2} we see that 
\begin{align*}
&\|\bar{u}''_n(\cdot,t)-u''(\cdot,t)\|_{L^\infty(I)}\\
\leq\,& C\|\bar{u}_n'''(\cdot,t)-u'''(\cdot,t)\|_{L^2(I)}^{\frac{3}{4}}\|\bar{u}_n(\cdot,t)-u(\cdot,t)\|_{L^2(I)}^{\frac{1}{4}} +C\|\bar{u}_n(\cdot,t)-u(\cdot,t)\|_{L^2(I)}
\end{align*}
for a.e.\ $t\in(0,T)$. 
Hence, 
\begin{align*}
&\int_0^T\|\bar{u}''_n(\cdot,t)-u''(\cdot,t)\|_{L^\infty(I)}^2\;\mathrm{d}t\\
\leq\,& 
C \|\bar{u}_n'''-u'''\|_{L^2(0,T;L^2(I))}^{\frac{3}{2}}\|\bar{u}_n-u\|_{L^2(0,T;L^2(I))}^{\frac{1}{2}}
+C \|\bar{u}_n-u\|_{L^2(0,T;L^2(I))}^2 \\
\leq\,&C \|\bar{u}_n-u\|_{L^2(0,T;L^2(I))}^{\frac{1}{2}} + C \|\bar{u}_n-u\|_{L^2(0,T;L^2(I))}^2, 
\end{align*}
where in the last inequality we used Lemma~\ref{Gtheorem:4.10} and that $u\in L^2(0,T;H^3(I))$. 
Applying \eqref{eq:250126-4} to the above estimate and \eqref{eq:250126-6}, we obtain \eqref{eq:s-limit_tilde_u}.

It remains to show \eqref{eq:limit_under_u}. 
By \eqref{eq:s-limit_tilde_u}, it suffices to prove
\begin{align}\label{eq:250130-3}
    \|\bar{u}_n-\underline{u}_n\|_{L^2(0,T;C^{1}(\bar{I}))} \to 0.
\end{align}
In view of Lemma~\ref{Gtheorem:2.1} we see that
\begin{align*}
    &\|\bar{u}_n-\underline{u}_n\|_{L^2(0,T;C^{1}(\bar{I}))}^2 
    \leq C \int_0^T\|\bar{u}_n''(\cdot,t)-\underline{u}_{n}''(\cdot,t)\|_{L^2(I)}^{\frac{3}{2}}\|\bar{u}_n(\cdot,t)-\underline{u}_{n}(\cdot,t)\|_{L^2(I)}^{\frac{1}{2}}\mathrm{d}t.
\end{align*}
Recall that $\sup_{t\in(0,T)}\|\bar{u}_n''(\cdot,t)\|_{L^2(I)}<\infty$.
Similarly $\underline{u}_n$ is uniformly bounded in $L^\infty(0,T;H^2(I))$. 
Indeed, by \eqref{eq:H2_estimate_step} we have $\sup_{t\in[0,\tau_n]}\|\underline{u}_{n}''(\cdot,t)\|_{L^2(I)} =\|{u}_{0}''\|_{L^2(I)}$.
In addition, Lemma~\ref{Gtheorem:4.4} yields the uniform estimate of $\sup_{t\in[\tau_n, T]}\|\underline{u}_{n}''(\cdot,t)\|_{L^2(I)}$.
Hence 
\begin{align}\label{eq:250622-1}
    &\|\bar{u}_n-\underline{u}_n\|_{L^2(0,T;C^{1}(\bar{I}))}^2 
    \leq C \int_0^T\|\bar{u}_n(\cdot,t)-\underline{u}_{n}(\cdot,t)\|_{L^2(I)}^{\frac{1}{2}}\mathrm{d}t.
\end{align}
Since $\bar{u}_n(x,t)-\underline{u}_n(x,t)=w_{i,n}\tau_n$ by definition, similar to \eqref{eq:250130-1} we obtain
\begin{align*}
\int_0^T \!\!\int_I|\bar{u}_n-\underline{u}_n|^2\;\mathrm{d}x\mathrm{d}t &\leq \sum_{i=1}^n\int_{(i-1)\tau_n}^{i\tau_n}\int_I \tau_n^2 |w_{i,n}|^2\;\mathrm{d}x\mathrm{d}t 
\leq \rho^2(1+4M_0^2)^{\frac{1}{2}}\tau_n^2, 
\end{align*}
which implies $\|\bar{u}_n-\underline{u}_n\|_{L^2(0,T;L^{2}(I))} \to 0$.
Together with \eqref{eq:250622-1} this gives \eqref{eq:250130-3}.
 \end{proof}


Now we are ready to show Theorem~\ref{thm:existence}.

\begin{proof}[Proof of Theorem~\ref{thm:existence}]
Let $u$ be the limit obtained by Lemma~\ref{lem:conv_u_n}. In what follows we show that this limit $u$ is a weak solution of \eqref{eq:P}. 
Note that the additional regularity follows from Lemmas~\ref{lem:conv_u_n}  and \ref{lem:conv_tilde-u_n}.

First we show that $u\in K_T$.
It follows by Lemma~\ref{lem:conv_tilde-u_n} that $u \in L^{\infty}(0,T;H(I)) \cap H^1(0,T;L^2(I))$. 
In addition, since the piecewise linear interpolation $u_n$ satisfies $u_n \ge \psi$ in ${I} \times (0,T)$, we deduce from \eqref{eq:s-limit_u} that $u \ge \psi$ in ${I}\times (0,T)$. 
Thus we have $u\in K_T$. 

Next we show that $u$ satisfies \eqref{eq:VI}.
Fix $v \in K_T$ arbitrarily. 
Then $v(\cdot,t)\in K$ for a.e.\ $t\in(0,T)$.
By convexity of $K$ it follows that for any $n\in\mathbf{N}$, $i = 1,\ldots, n$ and $\varepsilon \in [0,1)$,  
$$
u_{i,n}(x)+\varepsilon(v (x,t)-u_{i,n}(x) ) \geq \psi(x)  
$$
for any $x \in I$ and a.e.\ $t\in(0,T)$.
Note that $\|v\|_{C^0([0,T];C^1(\bar{I}))}<\infty$ holds since $v\in K_T$.
Then, Lemma \ref{Gtheorem:4.5} implies that for $\varepsilon > 0$ small enough
$$
\Vert u'_{i,n}+\varepsilon(v'(\cdot,t)-u'_{i,n}) \Vert_{L^{\infty}(I)}
\le (1-\varepsilon)\frac{3}{2}M_0 + \varepsilon\Vert v'(\cdot,t)\Vert_{L^{\infty}(I)}
\le 2M_{0}.
$$
Thus $u_{i,n}+\varepsilon(v(\cdot,t)-u_{i,n}) \in \tilde{K}$.
Since $u_{i,n}$ is a minimizer of $G_{i,n}$ in $\tilde{K}$, we have
\begin{equation} 
\label{Geq:5.5}
\frac{d}{d\varepsilon} G_{i,n}\big(u_{i,n}+\varepsilon (v(\cdot,t) - u_{i,n} ) \big) \Big|_{\varepsilon=0} \ge 0 
\end{equation}
for a.e.\ $t\in(0,T)$ and any $i=1, \ldots, n$. 
Integrating \eqref{Geq:5.5} with respect to $t$ on $((i-1)\tau_{n},i\tau_{n})$ and summing over $i=1, \ldots, n$, we have
\begin{equation} 
\label{Geq:5.6}
\int_0^T\!\!\!\int_I\bigg[ 2\frac{\bar{u}_n''(v-\bar{u}_n)''}{(1+(\bar{u}'_n)^2)^{\frac{5}{2}}} 
-5 \frac{|\bar{u}_n''|^2(v-\bar{u}_n)'}{(1+(\bar{u}'_n)^2)^{\frac{7}{2}}} - \lambda\frac{\bar{u}_n'(v-\bar{u}_n)'}{(1+(\bar{u}'_n)^2)^{\frac{1}{2}}} + \frac{\partial_t u_n(v-\bar{u}_n)}{(1+(\underline{u}'_n)^2)^{\frac{1}{2}}} \bigg] \; \mathrm{d}x \mathrm{d}t \ge 0
\end{equation}
for all $v\in K_T$.
In view of \eqref{eq:s-limit_tilde_u}, we see that
\begin{align*}
 2\int_0^T\!\!\!\int_I \frac{\bar{u}_n''(v-\bar{u}_n)''}{(1+(\bar{u}'_n)^2)^{\frac{5}{2}}} \; \mathrm{d}x\mathrm{d}t 
&\to 2\int_0^T\!\!\!\int_I \frac{u''(v-u)''}{(1+({u}')^2)^{\frac{5}{2}}} \; \mathrm{d}x\mathrm{d}t, \\
 5\int_0^T\!\!\!\int_I  \frac{|\bar{u}_n''|^2(v-\bar{u}_n)'}{(1+(\bar{u}'_n)^2)^{\frac{7}{2}}} \;  \mathrm{d}x\mathrm{d}t 
&\to 5\int_0^T\!\!\!\int_I  \frac{|u''|^2(v-u)'}{(1+(u')^2)^{\frac{7}{2}}} \; \mathrm{d}x\mathrm{d}t, \\ 
 \lambda \int_0^T\!\!\!\int_I \frac{\bar{u}_n'(v-\bar{u}_n)'}{(1+(\bar{u}'_n)^2)^{\frac{1}{2}}} \; \mathrm{d}x\mathrm{d}t 
&\to\lambda \int_0^T\!\!\!\int_I \frac{u'(v-u)'}{(1+(u')^2)^{\frac{1}{2}}}\, \mathrm{d}x\mathrm{d}t, 
\end{align*}
as $n \to \infty$ up to a subsequence. 
In addition, it follows from \eqref{eq:250126-2}, \eqref{eq:s-limit_tilde_u}, and \eqref{eq:limit_under_u} that 
\begin{equation*}
\int_0^T\!\!\!\int_I  \frac{\partial_t u_n(v-\bar{u}_n)}{(1+(\underline{u}'_n)^2)^{\frac{1}{2}}}  \; \mathrm{d}x \mathrm{d}t  \to \int_0^T\!\!\!\int_I \frac{\partial_tu (v-u)}{(1+(u')^2)^{\frac{1}{2}}}  \; \mathrm{d}x \mathrm{d}t   \quad \text{as} \quad n\to\infty, 
\end{equation*}
up to a subsequence. 
Thus, extracting a subsequence and letting $n\to\infty$ in \eqref{Geq:5.6}, we observe that $u$ satisfies \eqref{eq:VI}.

The remaining condition (iii) in Definition~\ref{def:weak_sol} immediately follows from the convergence \eqref{eq:s-limit_u} with the fact that $u_n$ satisfies $u_n(\cdot,0)=u_0$. 
 \end{proof}

\appendix 
\section{Elliptic integrals and functions}\label{sect:appendix_Jacobi}
In this appendix we collect the definitions of the elliptic integrals and the Jacobian elliptic integrals, and also collect their properties that we used within this note.
In this note we have used the \emph{incomplete elliptic integrals} 
\[
\mathrm{F}(x,q):=\int_0^x\frac{1}{\sqrt{1-q^2\sin^2\theta}}\;\mathrm{d}\theta \quad \text{and} \quad 
\mathrm{E}(x,q):=\int_0^x{\sqrt{1-q^2\sin^2\theta}}\;\mathrm{d}\theta, 
\]
for $x\in\mathbf{R}$ and $q\in[0,1)$. 
We define the \emph{incomplete elliptic integrals} $\mathrm{K}(q)$ and $\mathrm{E}(q)$ by $\mathrm{K}(q):=\mathrm{F}(\frac{\pi}{2},q)$ and $\mathrm{E}(\frac{\pi}{2},q)$, respectively. 
For $q=1$, $\mathrm{F}(x,1)$ is defined on $(-\frac{\pi}{2}, \frac{\pi}{2})$ and we regard $\mathrm{K}(1)=\infty$ by using the fact $\lim_{x\uparrow\frac{\pi}{2}}\mathrm{F}(x,1)=\infty$.

Next we introduce the Jacobian elliptic functions. 
Let us define the \emph{amplitude function} $\am(x,q)$ by the inverse function of $\mathrm{F}(x,q)$, so that 
\[
x=\int_0^{\am(x,q)}\frac{1}{\sqrt{1-q^2\sin^2\theta}}\;\mathrm{d}\theta. 
\]
For $q\in[0,1)$, the \emph{elliptic sine function} $\sn(x,q)$ and the \emph{elliptic cosine function} $\cn(x,q)$ are defined by
\[\sn(x,q):=\sin{\am(x,q)}, \quad \cn(x,q):=\cos{\am(x,q)}, \quad x\in\mathbf{R}.\]
Also the \emph{delta amplitude function} is defined by
\[
\dn(x,q)=\sqrt{1-q^2\sn^2{(x,q)}}, \quad x\in\mathbf{R}.
\]
We collect some fundamental properties of $\sn$, $\cn$, $\dn$ in the following proposition. 
\begin{proposition} \label{prop:Jacobi}
Let $q\in(0,1)$. 
\begin{itemize}
    \item[(i)] \ $\sn(x,q)$ is an odd $2\mathrm{K}(q)$-antiperiodic function on $\mathbf{R}$ and, in $[-\mathrm{K}(q),\mathrm{K}(q)]$, strictly increasing from $-1$ to $1$. 
    \item[(ii)] \ \ $\cn(x,q)$ is an even $2\mathrm{K}(q)$-antiperiodic function on $\mathbf{R}$ and, in $[0,2\mathrm{K}(q)]$, strictly decreasing from $1$ to $-1$. 
    \item[(iii)] \ \  $\dn(x,q)$ is a $\mathrm{K}(q)$-periodic function on $\mathbf{R}$ and, in $[0,\mathrm{K}(q)]$, strictly decreasing from $1$ to $\sqrt{1-q^2}$. 
\end{itemize}
\end{proposition}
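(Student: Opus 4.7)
The plan is to derive everything from the behaviour of the amplitude function $\am(\cdot,q)$, exploiting the fact that it is the inverse of $\mathrm{F}(\cdot,q)$. The first step is to observe that for $q\in(0,1)$ the integrand $\theta\mapsto 1/\sqrt{1-q^2\sin^2\theta}$ is smooth, positive, $\pi$-periodic, and even. Consequently $\mathrm{F}(\cdot,q):\mathbf{R}\to\mathbf{R}$ is a $C^\infty$ strictly increasing bijection satisfying
\[
\mathrm{F}(-x,q)=-\mathrm{F}(x,q),\qquad \mathrm{F}(x+\pi,q)=\mathrm{F}(x,q)+2\mathrm{K}(q).
\]
Inverting, $\am(\cdot,q):\mathbf{R}\to\mathbf{R}$ is a strictly increasing $C^\infty$ bijection with
\[
\am(-x,q)=-\am(x,q),\qquad \am(x+2\mathrm{K}(q),q)=\am(x,q)+\pi,
\]
and in particular $\am(0,q)=0$, $\am(\pm\mathrm{K}(q),q)=\pm\pi/2$, $\am(2\mathrm{K}(q),q)=\pi$.

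From here, (i) and (ii) follow by composing with $\sin$ and $\cos$. For (i), oddness of $\sn$ comes from oddness of $\sin$ and $\am$; the antiperiodicity
\[
\sn(x+2\mathrm{K}(q),q)=\sin(\am(x,q)+\pi)=-\sn(x,q)
\]
is immediate; and strict monotonicity on $[-\mathrm{K}(q),\mathrm{K}(q)]$ follows since $\am$ is a strictly increasing bijection from that interval onto $[-\pi/2,\pi/2]$, on which $\sin$ is strictly increasing from $-1$ to $1$. The argument for (ii) is symmetric: evenness of $\cn$ comes from evenness of $\cos$, antiperiodicity from the same $+\pi$ shift of $\am$, and monotonicity from the fact that $\am$ is a strictly increasing bijection of $[0,2\mathrm{K}(q)]$ onto $[0,\pi]$, on which $\cos$ decreases from $1$ to $-1$.

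For (iii) I would work directly with the identity $\dn(x,q)^2=1-q^2\sn(x,q)^2$, noting that $\dn\ge\sqrt{1-q^2}>0$, so the positive square root is smooth. Periodicity reduces to periodicity of $\sn^2$, since by (i) the antiperiodicity $\sn(\cdot+2\mathrm{K}(q),q)=-\sn$ yields $\sn^2(\cdot+2\mathrm{K}(q),q)=\sn^2$. For the monotonicity on $[0,\mathrm{K}(q)]$, part (i) gives that $\sn(\cdot,q)$ is strictly increasing from $0$ to $1$ on that interval, so $1-q^2\sn^2$ is strictly decreasing from $1$ to $1-q^2$, hence $\dn$ strictly decreases from $1$ to $\sqrt{1-q^2}$.

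No step here is delicate: the only point that requires care is the translation identity $\mathrm{F}(x+\pi,q)=\mathrm{F}(x,q)+2\mathrm{K}(q)$, which rests on the change of variables $\theta\mapsto\theta-\pi$ together with $\pi$-periodicity of $\sin^2\theta$ and symmetry of the integrand about $\pi/2$; once this is in hand, everything else is an algebraic consequence of the transformation rules for $\am$.
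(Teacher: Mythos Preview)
The paper states this proposition without proof, as a compendium of standard facts; your derivation from the translation identity $\mathrm{F}(x+\pi,q)=\mathrm{F}(x,q)+2\mathrm{K}(q)$ and the resulting shift rule for $\am$ is the natural route and is correct for (i) and (ii).

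There is one wrinkle in (iii) that you glide past: the statement asserts $\dn(\cdot,q)$ is $\mathrm{K}(q)$-periodic, but this is incompatible with the monotonicity clause in the same item, since $\dn(0,q)=1\neq\sqrt{1-q^2}=\dn(\mathrm{K}(q),q)$ for $q\in(0,1)$. The correct (minimal) period is $2\mathrm{K}(q)$, and that is precisely what your argument establishes via $\sn^2(\cdot+2\mathrm{K}(q),q)=\sn^2$. You have not proved $\mathrm{K}(q)$-periodicity because it is false; the defect lies in the stated proposition, not in your reasoning. It would be worth flagging this explicitly rather than writing ``periodicity reduces to\ldots'' without naming the period you actually obtain.
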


Here let us briefly review a relation between critical points of $\mathcal{E}_\lambda$ and the Jacobian elliptic functions.
If a planar curve $\gamma$ is a critical point, then calculating the first variation, we find that the
signed curvature $k$ of $\gamma$ satisfies $2\partial_s^2k +k^3 -\lambda k=0$ (see e.g.\ \cite[Lemma A.2]{DP14} for the calculation of the first variation). 
This fact together with \cite[Proposition 3.3]{Lin96} implies that the signed curvature of $k$ of every critical point of $\mathcal{E}_\lambda$ is given by either of the following forms:
\begin{itemize}
    \item[(i)] $k(s)=\pm A\cn(\alpha s+s_0, q)$ for some $A\geq0$, $\alpha>0$, $s_0\in\mathbf{R}$, $q\in[0,1]$ such that $A^2=4\alpha^2q^2$ and $\lambda=2\alpha^2(2q^2-1)$, 
    \item[(ii)] $k(s)=\pm A\dn(\alpha s+s_0, q)$ for some $A>0$, $\alpha>0$, $s_0\in\mathbf{R}$, $q\in[0,1]$ such that $A^2=4\alpha^2$ and $\lambda=2\alpha^2(2-q^2)$.
\end{itemize}
(In particular, if $q=1$, then in the cases (i) and (ii) the signed curvature is given by $\cn(\cdot,1)=\dn(\cdot,1)=\sech$).
This fact implies that if $k$ satisfies $2\partial_s^2 k+k^3-\lambda k=0$ with $\lambda=0$, then $k$ is $k\equiv0$, or otherwise there exist $\alpha>0$ and $s_0\in\mathbf{R}$ such that
\[
k(s) = \pm \sqrt{2}\alpha \cn(\alpha s+s_0, \tfrac{1}{\sqrt{2}}).
\]
Here, $\alpha>0$ plays a role as a scaling factor. 
In addition, by the periodicity of $\cn$, we may replace $s_0\in\mathbf{R}$ with $s_0\in [-\mathrm{K}(\tfrac{1}{\sqrt{2}}), \mathrm{K}(\tfrac{1}{\sqrt{2}}))$.
This yields \eqref{eq:rect_curvature}.

\section*{acknowledgement}
The author would like to thank Nobuhito Miyake for helpful comments and for drawing the author's attention to reference \cite{LO20}. 
The author would also like to thank the anonymous referee for helpful comments.
The author is supported by JSPS KAKENHI Grant Numbers JP24K16951.

\bibliographystyle{abbrv}
\bibliography{ref_Yoshizawa}

\end{document}